\theoremstyle{plain} {
  \newtheorem{thm}{Theorem}[section]
  \newtheorem{defn}[thm]{Definition}
  \newtheorem{cor}[thm]{Corollary}
  \newtheorem{lem}[thm]{Lemma}
  \newtheorem{prop}[thm]{Proposition}
  \theoremstyle{definition}
  \newtheorem{rem}[thm]{Remark}
    
  \theoremstyle{plain}
  
  \newtheorem{notation}[thm]{Notation}

}
\renewcommand{\subsubsection}{\sssection\rm}
\newcommand{\bE}{\mathbf E}
\newcommand{\bG}{\mathbf G}
\newcommand{\bH}{\mathbf H}
\newcommand{\bP}{\mathbf P}
\newcommand{\bT}{\mathbf T}
\newcommand{\bu}{\mathbf u}
\newcommand{\bC}{\mathbf C}
\newcommand{\bw}{\mathbf w}
\newcommand{\bs}{\mathbf s}
\newcommand{\bS}{\mathbf S}
\newcommand{\cE}{\mathcal E}
\newcommand{\cF}{\mathcal F}
\newcommand{\cG}{\mathcal G}
\newcommand{\cH}{\mathcal H}
\newcommand{\cO}{\mathcal O}
\newcommand{\cP}{\mathcal P}
\newcommand{\cT}{\mathcal T}
\newcommand{\cX}{\mathcal X}
\renewcommand{\P}{\mathbb P}
\DeclareMathOperator{\spec}{Spec}
\newcommand{\pr}{\text{\rm pr}}
\newcommand{\Aff}{\mathbf {A}}
\newcommand{\Pro}{\mathbf {P}}
\newcommand \xra {\xrightarrow }
\newcommand \hra {\hookrightarrow }
\newcommand{\E}{\mathcal E}
\renewcommand{\P}{\mathbb P}
\renewcommand \phi\varphi
\begin{document}

\title{On the Gille theorem for the relative projective line
}

\author{Ivan Panin\footnote{
St. Petersburg Department of Steklov Mathematical Institute, nab. r. Fontanki 27, 191023, St. Petersburg, Russia;
paniniv@gmail.com}
, \ Anastasia Stavrova\footnote{
St. Petersburg Department of Steklov Mathematical Institute, nab. r. Fontanki 27, 191023, St. Petersburg, Russia;
anastasia.stavrova@gmail.com}}

\maketitle

\begin{abstract}
Let $X$ be a Noetherian separated scheme. Let $G$ be a reductive $X$-group scheme,
and let $E$ be a principal $G$-bundle over $\mathbb{P}^1_X$. We prove that
if the restriction of $E$ to $\infty\times X$ is Zariski locally trivial, then $E$ is itself Zariski locally trivial.
\end{abstract}

\section{Main results}\label{Introduction}
All the rings in the present text are commutative and Noetherian.
All the schemes in the present text are Noetherian and separated.\\\\
Let $R$ be a ring. Recall that an $R$-group scheme $\bG$ is called \emph{reductive},
if it is affine and smooth as an $R$-scheme and if, moreover,
for each algebraically closed field $\Omega$ and for each ring homomorphism $R\to\Omega$ the scalar extension $\bG_\Omega$ is
a connected reductive algebraic group over $\Omega$. This definition of a reductive $R$-group scheme
coincides with~\cite[Exp.~XIX, Definition~2.7]{SGA3}.

Assume that $X$ is a  scheme, $\bG$ is a reductive $X$-group scheme.
Recall that a $X$-scheme $\cG$ with an action of $\bG$ is called \emph{a principal $\bG$-bundle over $X$},
if $\cG$ is faithfully flat and quasi-compact over $X$ and the action is simply transitive,
that is, the natural morphism $\bG\times_X\cG\to\cG\times_X\cG$ is an isomorphism, see~\cite[Section~6]{Gr4}.
It is well known that such a bundle is trivial locally in \'etale topology but in general not in the Zariski topology.
The first main result of our paper is the following theorem.

\begin{thm}\label{th:X}
Let $X$ be a  scheme. Let $\bG$ be a reductive $X$-group scheme
and $\cG$ be a principal $\bG$-bundle over $\P^1_X$. Suppose $\cG|_{\infty \times X}$ is Zarisky locally trivial.
Then $\cG$ is Zarisky locally trivial on $\P^1_X$.
\end{thm}

If $X=\spec k$, where $k$ is a field, and $\bG$ is a reductive $k$-group,
principal $\bG$-bundles over $\P^1_k$ are very well
understood. Specifically,
 any principal $\bG$-bundle over $\P^1_k$ that has a trivial restriction to the infinity section,
is induced by a line bundle over $\P^1_k$ by means of a cocharacter $\mathbb G_{m,k}\to\bG$,
and any two such bundles are isomorphic if and
only if the corresponding cocharacters are permuted by the Weyl group of $\bG$.
In particular, every such principal $\bG$-bundle is trivial locally in the Zariski topology
on $\P^1_k$.
This theorem was proved by A. Grothendieck if $k$ is the field of complex numbers~\cite{Gr57}, by G. Harder if $\bG$ is quasi-split~\cite{Harder68},
and by P. Gille in general~\cite[Th. 3.8]{GilleTorseurs} (heavily relying on a theorem of M.~S. Raghunathan~\cite[Th. 3.4]{R1}).
These results are used in the present text in a substantial way.

{\it The following notation will be used from now on in the statements of our results.
}
Let $\cO$ be a semi-local  ring.
Put $U:=\spec \cO$.
Let $\bG$ be a reductive $U$-group scheme.
For a $U$-scheme $W$ we denote by $\mathbb A^1_W$ the affine line over $W$, and by $\P^1_W$ the projective line over $W$.
We identify ${\mathbb A}^1_W$ with $\P^1_W-\infty\times W$.
By a principal $\bG$-bundle over a $U$-scheme $W$ we understand a principal $\bG\times_U W$-bundle.

Theorem~\ref{th:X} is yielded by the following Theorem \ref{th-two-pullbacks}
 by means of several intermediate statements given below.

\begin{thm}\label{th-two-pullbacks}
For any reductive $U$-group scheme $\bG$, any
principal $\bG$-bundle $\cG$ over $\mathbb P^1_U$,
any semi-local  $U$-scheme $V$ and any two $U$-morphisms
$\phi_1,\phi_2: V\rightrightarrows \mathbb P^1_U$, the two principal $\bG$-bundles
$$\phi^*_1(\cG) \ \ \text{and} \ \ \phi^*_2(\cG)$$
over $V$ are isomorphic.
\end{thm}

\begin{cor}\label{cor:red-sections}
Let $U$, $\bG$ and $\cG$ be as in Theorem~\ref{th-two-pullbacks}.

$(i)$ For any two sections $s_1,s_2:U\to\P^1_U$ of the projection
$\P^1_U\to U$, the pull-backs $s_1^*(\cG)$ and $s_2^*(\cG)$ are isomorphic as principal $\bG$-bundles over $U$.

$(ii)$ If $\cG|_{\infty\times U}$ is trivial, then for any section $s:U\to\P^1_U$ of the projection
$\P^1_U\to U$, the pull-back $s^*(\cG)$ is trivial.
\end{cor}
After the initial version of the present paper was published on
arXiv.org, the statement of Corollary \ref{cor:red-sections}
was reproved and slightly generalized by
K. \v{C}esnavi\v{c}ius and R. Fedorov in
\cite[Theorem 3.6]{CF}. \\\\
Theorem~\ref{th:X} follows easily from Theorem \ref{th:zar-cover-red}
which is another corollary of
Theorem~\ref{th-two-pullbacks}.

\begin{thm}\label{th:zar-cover-red}
Let $U$, $\bG$ be as in Theorem~\ref{th-two-pullbacks}.

$(i)$
For any principal $\bG$-bundle $\cG$ over $\mathbb P^1_U$, there are two closed subschemes $Z_1$ and $Z_2$ of $\P^1_U$,
finite over $U$, such that $Z_1\cap Z_2=\emptyset$ and the $\bG$-bundles $\cG|_{\P^1_U-Z_1}$ and $\cG|_{\P^1_U-Z_2}$
are extended from $U$.
Moreover, if $E=\cG|_{\infty\times U}$, then these two bundles are both extensions of the $\bG$-bundle $E$.

$(ii)$ In particular, if $\cG|_{\infty\times U}$ is trivial,
then there are two closed subschemes $Z_1,Z_2$ as in $(i)$ such that $\cG$ is trivial away from $Z_i$ for $i=1,2$.
That is, our
$\cG$ is obtained by patching
two trivial $\bG$-bundles
over  $\Pro^1_U-(Z_1\cup Z_2)$.
\end{thm}

{\it In the semi-simple simply connected case we prove more precise results.
}
To state them recall that an $R$-group scheme $\bG$ is called simple
(respectively, semi-simple), if it is affine and smooth
as an $R$-scheme and if, moreover, for each ring homomorphism
$s:R\to\Omega(s)$ to an algebraically closed field $\Omega(s)$,
its scalar extension $\bG_{\Omega(s)}$
is a connected simple (respectively, semi-simple) algebraic
group over $\Omega(s)$. The class of semi-simple group schemes
contains the class of simple group schemes. This notion of a
simple $R$-group scheme coincides with the notion of a {simple
semi-simple $R$-group scheme from Demazure---Grothendieck
\cite[Exp.~XIX, D\'ef.~2.7; Exp.~XXIV, 5.3]{SGA3}.}

A semi-simple $R$-group scheme $\bG$ is called {\it simply connected\/}
provided that for each ring homomorphism
$s:R\to \Omega(s)$ of $R$ to an algebraically closed field
$\Omega(s)$ the scalar extension $\bG_{\Omega(s)}$ is a
simply connected $\Omega(s)$-group scheme.
This definition coincides with~\cite[Exp.~XXII, D\'ef.~4.3.3]{SGA3}.

\begin{thm}\label{cor:inf_1}
Let $U$ be semi-local and $\bG$ be a semi-simple simply connected $U$-group scheme.
Then there are two closed subschemes $Y'$ and $Y''$ in $\mathbb G_{m,U}$,  finite and \'{e}tale
over $U$ such that $Y'\cap Y''=\emptyset$ and
the set of
isomorphism classes of principal $\bG$-bundles
$\cG$ over $\P^1_U$ trivial at infinity
equals to the double coset
$$
\bG(\P^1_U-Y')\setminus \bG(\Pro^1_U-(Y'\cup Y''))\ /\ \bG(\P^1_U-Y'').
$$
\end{thm}

This theorem is a direct consequence of the following one
\begin{thm}\label{th:inf}
Let $U$ be as above and let $\bG$ be a semi-simple simply connected group scheme over $U$.

$(i)$ There is a closed subscheme $Y$ in $\mathbb A^1_U$,  finite and \'etale over $U$,
such that
every $\bG$-bundle $\cG$ on $\P^1_U$ that is trivial at infinity, is trivial also over $Y$ and over
$\P^1_U-Y$. If $Z\subset \P^1_U$ is a closed subscheme finite over $U$, then one can choose the above $Y$
such that additionally $Y\cap Z=\emptyset$.

$(ii)$
The isomorphism classes of principal $\bG$-bundles
$\cG$ over $\P^1_U$ such that $\cG|_{\infty\times U}$ is trivial are in bijective correspondence with
the double cosets
$$
\bG(\P^1_U-Y)\setminus \bG(\dot{Y}^h)\ /\ \bG(Y^h),
$$
where $Y^h$ stands for the henselization of the pair $(\mathbb A^1_U,Y)$ (see~\cite{Gabber} or~\cite{FP} for the definition) and
$\dot{Y}^h$ stands for $Y^h-Y$.
\end{thm}

The major difficulty in proving this Theorem is to check the triviality on $Y$
of any bundle $\cG$ as in the item $(i)$. The main novelty is that we use
Theorem \ref{th-14.03} below to check this (see the base change diagram~\eqref{base-change-via-Y} in the proof of
Theorem~\ref{th:sc}).

We use the following definition of an isotropic group.
Let $\bG$ be a semi-simple simply connected group scheme over a ring $R$.
Its parabolic $R$-subgroup scheme $\bP$ is called {\it strictly proper}, if for each ring homomorphism
$s:R\to\Omega(s)$, where
$\Omega(s)$ is an algebraically closed field,
the type of the parabolic subgroup $\bP_{\Omega(s)}$ in the sense of~\cite[Exp. XXVI, \S 3.2]{SGA3}
does not contain any connected component of the Dynkin diagram of $\bG_{\Omega(s)}$.
We will say that
a semi-simple simply connected group scheme
$\bG$ is {\it isotropic}, if it contains a strictly proper parabolic $R$-subgroup $\bP$.
For isotropic simply connected groups, we prove the following result
which parallels the classic Horroks' theorem on projective modules~\cite{Horrocks}.
Note that
{\it this result is crucial for the semi-simple simply connected case}.

\begin{thm}\label{th-14.03}
Let $W$ be an affine
 scheme.
Let $\bG$ be an isotropic simply connected
semi-simple
$W$-group scheme.
Let $\cG$ be a principal $\bG$-bundle over $\mathbb P^1_W$ such that
$\mathcal G|_{\infty\times W}$ is a trivial $\bG$-bundle. Then $\cG|_{\mathbb A^1_W}$ is trivial.
\end{thm}

The idea of the proof of Theorem~\ref{th-two-pullbacks} is to reduce the case of a general
reductive group to the two special cases: the one of a torus, where one uses standard spectral sequence
arguments, and the one of a simply connected group, where one applies Theorem~\ref{th:inf}.
The reduction of the general case to these two cases uses a version of the trick that first appeared in the proof of~\cite[Theorem 4]{Fed},
namely, the application of a suitable $n$th power endomorphism of $\P^1_U$.

The present text is organized as follows.
In Section \ref{isotr_section} Theorem \ref{th-14.03} is proved.
In Section \ref{SchemeY_section} we prove a crucial special case of Theorem~\ref{th:inf}.
In Section \ref{semi-simple} Theorem~\ref{th:inf} and Theorem~\ref{cor:inf_1} are proved.
In Section~\ref{sect:t_sect} we prove some results on toral bundles which are necessary to prove
Theorem~\ref{th-two-pullbacks}.
In Section~\ref{sec:red} we prove Theorem~\ref{th-two-pullbacks} and deduce from it
Theorems~\ref{th:zar-cover-red} and~\ref{th:X}.

We thank K\c{e}stutis \v{C}esnavi\v{c}ius and Roman Fedorov for bringing to our attention that the regularity assumption on $U$
is not necessary.

Finally, indicate briefly a relationship of the topic of the present text to
the well-known conjecture due to J.-P.~Serre and A.~Grothendieck. They conjectured
(see~\cite[Remarque, p.31]{Se}, \cite[Remarque 3, p.26-27]{Gr1}, and~\cite[Remarque~1.11.a]{Gr2})
that if $X$ is a regular scheme, then any generically trivial principal bundle
$\cG$ over $X$ is trivial locally in the Zariski topology.
Recall that if $X$ is over a field, then the conjecture is proved in \cite{Pan3}.
If $X$ is over an {\it an infinite} field, then the conjecture is proved earlier in \cite{FP}.
A detailed description of earlier results can be found in~\cite{P}.
Most of results on the Grothendieck--Serre conjecture rely in their proofs on a
suitable theorem on the structure of principal $\bG$-bundles on the relative projective line.
The results of the present paper are used by the authors
in the proof of the ``constant'' mixed characteristic case of the Grothendieck--Serre conjecture~\cite{PSt},
and are expected to be useful in obtaining further results on this conjecture.

\section{Proof of the isotropic simply connected case}\label{isotr_section}
The main aim of this section is to prove Theorem \ref{th-14.03} on isotropic simply connected groups.
We use in the proof the following two statements on not necessarily isotropic groups extending~\cite[Lemma A.1 and Proposition 9.6]{PSV}.

\begin{lem}
\label{Horrocks}
Let $W$ be an affine
 scheme.
Let $\bH$ be a simply connected semi-simple $W$-group scheme and $\bH'=\text{GL}_{N,W}$.
Let $j: \bH \hra \bH'$ be a closed embedding of the $W$-group scheme.

Let $F \in H^1(\Pro^1_W, \bH)$
be a principal $\bH$-bundle, and let
$M :=j_*(F) \in H^1(\Pro^1_W, \bH')$
be the corresponding principal $H^{\prime}$-bundle.
If $M$ is a trivial $\bH^{\prime}$-bundle, then there exists a principal
$\bH$-bundle $F_0$ over $W$ such that
$pr^*(F_0) \cong F$,
where
$pr: \Pro^1_W \to W$
is the canonical projection.
\end{lem}

\begin{proof}
By \cite[Cor.6.12.(ii)]{C-T-S2} the $fppf$-sheaf
$\bH'/j(\bH)$ is representable by an affine $W$-scheme $X$ of finite type over $W$.
Consider the long exact sequence of pointed sets
$$1 \to \bH(\Pro^1_W) \xra{j_*} \bH^{\prime}(\Pro^1_W) \to X(\Pro^1_W)
\xra{\partial} H^1_{\text{\'{e}t}}(\Pro^1_W, \bH) \xra{j_*} H^1_{\text{\'{e}t}}(\Pro^1_W, \bH^{\prime}).$$
Since $j_*(F)$ is trivial, there is $\varphi \in X(\Pro^1_W)$ such that
$\partial (\varphi)= F$.

The $W$-morphism $\varphi: \Pro^1_W \to X$ is a $W$-morphism of a $W$-projective scheme to a $W$-affine scheme $X$ of finite type.
Thus $\varphi$ is "constant", that is, there exists a section $s: W \to X$ such that
$\varphi= s \circ pr$.
Consider another long exact sequence of pointed sets, this time the one corresponding to the scheme $W$,
and the morphism of the first sequence to the second one induced by the projection $pr$. We get a big commutative diagram.
In particular, we get the following commutative square
\begin{equation}
\label{Representation1}
    \xymatrix{
X(W) \ar[d]_{pr^*_W} \ar[rr]^{\partial} && H^1_{\text{\'{e}t}}(W, H) \ar[d]^{pr^*_W} \\
X(\Pro^1_W) \ar[rr]^{\partial} && H^1_{\text{\'{e}t}}(\Pro^1_W, H).       \\  }
\end{equation}
We have $\pr^*_W(s)=\varphi$. Hence
$$F= \partial (\varphi) = \partial (\pr^*_W(s)) = pr^*_W ( \partial (s)).$$
Setting $F_0=\partial (s)$ we see that
$F= pr^*_W ( F_0)$. The Lemma is proved.

\end{proof}
Now let $\cO'$ be a semi-local  ring with and put $W:=\spec \cO'$.
Let $w_1,\ldots,w_m$ be the closed points of $W$. Let $k(w_i)$ be the residue field of $w_i$.
Consider the reduced closed subscheme $\bw$ of $W$
whose points are $w_1$, \ldots, $w_v$. Thus
$\bw\cong\coprod_i\spec k(w_i)$.
We are going to use the following theorem, which is an extension of
\cite{PSV,Tsy}.
\begin{thm}\label{th:tsy}
Let $W=\spec \cO'$ be semi-local  scheme.
Let $\bG$ be a semi-simple $W$-group scheme. Let $\cG$ be a principal $\bG$-bundle over $\P^1_{W}$.
Let $\cG_\bw$ be the restriction of $\cG$ to $\P^1_{\bw}$. If $\cG_\bw$ is a trivial principal $\bG$-bundle, then
$\cG$ is extended from $W$ via the pull-back along the canonical projection $\P^1_{W}\to W$.
\end{thm}
\begin{proof}
If $\cO'$ contains a field, then $W$ is a  semi-local scheme
over the prime subfield of $\cO'$, and the claim is precisely~\cite[Proposition 9.6]{PSV}.

In general, by \cite[Corollary 3.2]{Thomason}
there is a closed $W$-group scheme embedding
$j: \bG \hra \text{GL}_{N,W}$
for an $N > 0$. By the assumption of the theorem, the $\bG$-bundle $\cG$ is trivial on
$\Pro^1_{\bw} \subset \Pro^1_W$. Hence the $\text{GL}_{N,W}$-bundle $j_*(\cG)$ over $\Pro^1_W$
is trivial over $\Pro^1_{\bw}$.

The $W$-group scheme
$\text{GL}_{N,W}$
is just the ordinary general linear group.
Thus $j_*(\cG)$ corresponds to a vector bundle $M$ over $\Pro^1_W$.
Moreover, this vector bundle is trivial on
$\Pro^1_{\bw}$.
Using the equality
$H^1(\Pro^1, \mathcal O_{\Pro^1})=0$
and
\cite[Cor. 4.6.4]{EGAIII1},
we see that $M$ is of the form $M=pr^*(M_0)$
for a vector bundle $M_0$ over $W$.
Since $W$ is semi-local, $M_0$ is trivial over $W$.
Thus $M$ is trivial on $\Pro^1_W$.
Thus
$j_*(\cG)$ is a trivial $\text{GL}_{N,W}$-bundle.

Now, applying Lemma~\ref{Horrocks} to the embedding
$j:\bG \hra \text{GL}_{N,W}$,
we see that $\cG=pr^*(\cG_0)$ for some
$\cG_0 \in H^1(W, \bG)$.
Theorem~\ref{th:tsy} is proved.
\end{proof}

In what follows we use the notion of an isotropic simply connected semisimple group introduced in the Introduction.
The following lemma is in order

\begin{lem}\label{strictly proper_parabolic}
Let $\tilde W$ be a connected affine scheme, which is finite \'etale over a connected affine scheme $W$, and let
$\tilde \bG$ is a simple simply connected $\tilde W$-group scheme.

$(i)$ If $\tilde \bG$ is isotropic and $\tilde \bP\subset \tilde \bG$ is a strictly proper parabolic subgroup, then
the Weil restriction $R_{\tilde W/W}(\tilde \bP)$ is a strictly proper parabolic
in the Weil restriction $R_{\tilde W/W}(\tilde \bG)$, and thus $R_{\tilde W/W}(\tilde \bG)$ is isotropic.

$(ii)$ Assume that $W$ is, moreover, semi-local. If $R_{\tilde W/W}(\tilde \bG)$ has a proper parabolic $W$-subgroup,
then $\tilde \bG$ is isotropic, and, as a consequence, $R_{\tilde W/W}(\tilde \bG)$ is isotropic and has a strictly proper
parabolic subgroup.
\end{lem}
\begin{proof}
$(i)$ Write $W=\spec R$ for a connected commutative ring $R$, and take a ring homomorphism $s:R\to \Omega(s)$ from $R$ to an algebraically closed field
$\Omega(s)$. Then $\tilde W\times_W \spec \Omega(s)$ is isomorphic to the disjoint union
of $[\tilde W:W]$ copies of $\spec\Omega(s)$, and $R_{\tilde W/W}(\tilde \bG)\times_W\spec \Omega(s)$ is isomorphic to the product
of $[\tilde W:W]$ copies of the simple simply connected algebraic $\Omega(s)$-group $\tilde\bG_{\Omega(s)}$.
Consequently, the Dynkin diagram of $R_{\tilde W/W}(\tilde \bG)\times_W\spec \Omega(s)$
is a disjoint union of the same number of copies of the Dynkin diagram of $\tilde\bG_{\Omega(s)}$.
Similarly, $R_{\tilde W/W}(\tilde \bP)\times_W\spec \Omega(s)$ is isomorphic to the product
of $[\tilde W:W]$ copies of the parabolic subgroup $\tilde\bP_{\Omega(s)}$ of $\tilde\bG_{\Omega(s)}$.
Consequently, $R_{\tilde W/W}(\tilde \bP)\times_W\spec \Omega(s)$ is a strictly proper parabolic subgroup of
$R_{\tilde W/W}(\tilde \bG)\times_W\spec \Omega(s)$, i.e. its type does not contain any connected component of the Dynkin
diagram of this group.
Since $R_{\tilde W/W}(\tilde \bP)$ is also a smooth closed $W$-subgroup
of $R_{\tilde W/W}(\tilde \bG)$, by the very definition of a parabolic subgroup~\cite[Exp. XXVI, D\'ef. 1.1]{SGA3}
it is a parabolic $W$-subgroup, and, clearly, it is strictly proper.

$(ii)$ Assume that $W$ is semi-local. Then $\tilde W$ is also semi-local.
If $R_{\tilde W/W}(\tilde \bG)$ has a proper parabolic subgroup,
by~\cite[Exp. XXVI, Corollaire 6.12]{SGA3}
$R_{\tilde W/W}(\tilde \bG)$ contains a closed subgroup of the form $\mathbb G_{m,W}$.
By the adjunction between Weil restriction and base change,  it follows that
there is a non-trivial homomorphism
$\varphi: \mathbb{G}_{m,\tilde W} \to \tilde\bG$. Then again
by~\cite[Exp. XXVI, Corollaire 6.12]{SGA3} $\tilde G$ has a proper parabolic $\tilde W$-subgroup. Since $\tilde\bG$
is simple and $\tilde W$ is connected, this parabolic subgroup is necessarily strictly proper. Hence $\tilde\bG$ is
isotropic. It remains to apply $(i)$.
\end{proof}

\begin{lem}\label{lem:isotr}
Let $W$ be the semi-local  scheme and $\bw\subset W$ be its the above closed reduced subscheme.
Let $\bG$ be a simply connected semi-simple
$W$-group scheme. Let $Z\subseteq\mathbb{A}^1_W$ be a closed subscheme finite and \'etale over $W$, and
such that $\bG_Z$ is isotropic.
Assume also that there is a section $s:W\to\P^1_W$ of the canonical projection $\P^1_W\to W$ such that
$s(W)$ is a subscheme of either $Z$ or $\P^1_W-Z$.
Let $\cG$ be a principal $\bG$-bundle over $\mathbb P^1_W$ such that the restrictions
$\cG|_Z$, $\cG|_{s(W)}$ and $\cG_{\bw}|_{\P^1_\bw-Z_\bw}$ are trivial $\bG$-bundles.
Then $\cG|_{\P^1_W-Z}$ is trivial.
\end{lem}
\begin{proof}
By assumption the group $\bG_Z$ is isotropic. Let $\bP$ be a strictly proper parabolic $Z$-subgroup of $\bG_Z$.
For any affine $Z$-scheme $V$ denote by $\bE(\bG(V))$ the subgroup of $\bG(V)$ generated by the $V$-points
of the unipotent radical of $\bP$ and of the unipotent radical of any fixed parabolic $Z$-subgroup of $\bG_Z$
opposite to $\bP$ (such a parabolic subgroup exists by~\cite[Exp. XXVI, Cor. 2.3; Th. 4.3.2]{SGA3}).

Denote by $Z^h$ the henselization of $\mathbb A^1_W$ along $Z$ and by $\dot Z^h$ the open subscheme
$Z^h-Z$ of $Z^h$. By assumption, $\mathcal G|_Z$ is a trivial bundle.
As a consequence the $\bG$-bundle $\cG|_{Z^h}$ is trivial too.

Put $\cG'=\cG|_{\P^1_U - Z}$. Since the $\bG$-bundle $\cG|_{Z^h}$ is trivial
we can present $\cG$ in the form $\cE(\cG', \phi)$,
where $\phi: \dot Z^h\times_W \bG\to \cG|_{\dot Z^h}$
is a $\bG_{\dot Z^h}$-bundle isomorphism.
In this case the $\cG_\bw$ is presented in the form $\cE(\cG'_\bw, \phi_\bw)$,
where $\phi_\bw: \dot Z^h_\bw\times_\bw \bG_{\bw}\to \cG_\bw|_{\dot Z^h_\bw}$
is a $\bG_{\dot Z^h_\bw}$-bundle isomorphism.

By assumption the bundle $\cG_\bw|_{\P^1_\bw-Z_\bw}$
is trivial.
Using arguments as in \cite{FP}
one can find an element $\alpha_\bw\in \bE(\bG_\bw(\dot Z^h_\bw))$
such that the $\bG_\bw$-bundle $\E(\cG'_\bw, \phi_\bw\circ \alpha_\bw)$ over $\P^1_\bw$
is trivial.
Namely, since $\bP_{Z^h_\bw}$ is strictly proper,
 by~\cite[Fait 4.3, Lemme 4.5]{Gille:BourbakiTalk} one has
$$
\bG(\dot Z^h_\bw)=\bG(Z^h_\bw)\cdot \bE(\bG_\bw(\dot Z^h_\bw)).
$$

The scheme $\dot Z^h$ is affine. Thus, the group homomorphism
$$
\bE(\bG(\dot Z^h))\to \bE(\bG_\bw(\dot Z^h_\bw))
$$
is surjective.
Thus, there is an element
$\alpha\in \bG(\dot Z^h)$
such that
$\alpha|_{\dot Z^h_\bw}=\alpha_\bw$ in $\bG_\bw(\dot Z^h_\bw)$.
Put
$$
\cG_{mod}=\cE(\cG', \phi\circ \alpha).
$$
Then the $\bG_\bw$-bundle
$(\cG_{mod})_\bw=\E(\cG'_\bw, \phi_\bw\circ \alpha_\bw)$ is trivial over $\P^1_\bw$.

Since the bundle
$(\cG_{mod})_\bw$ is trivial, by Theorem~\ref{th:tsy}
the $\bG$-bundle $\cG_{mod}$
is extended from $W$ via the pull-back along the projection
$pr_W: \P^1_W\to W$.

On the other hand $\cG_{mod}|_{s(W)}$
is trivial, since $s(W)\subseteq \P^1_W-Z$ or $s(W)\subseteq Z$ and
$\cG_{mod}|_{\P^1_W-Z}=\cG|_{\P^1_W-Z}$ and $\cG_{mod}|_Z=\cG|_Z$.
Hence the $\bG$-bundle $\cG_{mod}$ is trivial over $\P^1_W$.
Since $\cG|_{\P^1_W - Z}=\cG_{mod}|_{\P^1_W - Z}$
it follows that
the $\bG$-bundle $\cG|_{\P^1_W - Z}$ is trivial.

\end{proof}

\begin{thm}\label{th-14.03-simple}
Let $W$ be a connected semi-local  scheme.
Let $\bG$ be an isotropic simply connected simple
$W$-group scheme.
Let $\cG$ be a principal $\bG$-bundle over $\mathbb P^1_W$ such that
$\mathcal G|_{\infty\times W}$ is a trivial $\bG$-bundle. Then $\cG|_{\mathbb A^1_W}$ is trivial.
\end{thm}
\begin{proof}
We apply Lemma~\ref{lem:isotr} to the given group scheme $\bG$ and the closed subscheme $Z=\infty\times W$ of the
affine line $\P^1_W-(0\times W)$.
We take $s:W\to\P^1_W$ to be the
natural isomorphism between $W$ and $\infty\times W=Z$. By assumption of the theorem, $\cG|_{s(W)}$ is trivial.
Also by assumption $\bG$ is an isotropic simply connected simple $W$-group scheme.
Hence $\bG_Z$ satisfies the condition of Lemma~\ref{lem:isotr}.
Since $\P^1_W-Z={\mathbb A}^1_W$, we have that $\text{Pic}(\P^1_\bw-Z_\bw)$ is trivial.
Then by the Gille Theorem \cite{GilleTorseurs} $\cG_\bw|_{\P^1_\bw-Z_\bw}$
is trivial.  Then by Lemma~\ref{lem:isotr} $\cG|_{\P^1_W - Z}=\cG|_{{\mathbb A}^1_W}$ is trivial.
\end{proof}

\begin{proof}[Proof of Theorem \ref{th-14.03}]
First, we may and will suppose that the scheme $W$ is connected.
Second, by a generalization of Quillen's local-global principle~\cite[Theorem 3.2.5]{AHW}
we may and will suppose that the scheme $W$ is local (or connected semi-local).
Applying the Faddeev--Shapiro Lemma \cite[Exp. XXIV Prop. 8.4]{SGA3} and Lemma~\ref{strictly proper_parabolic}
we may and will suppose that
the scheme $W$ is connected semi-local and the $W$-group scheme $\bG$
is an isotropic simply connected simple $W$-group scheme.
In this case Theorem \ref{th-14.03} coincides with
Theorem \ref{th-14.03-simple}, which is already proved.
\end{proof}

\section{A special case of Theorem~\ref{th:inf} }\label{SchemeY_section}

The aim of this Section is to prove a special case of Theorem~\ref{th:inf} that will be used to establish the general case.
This is done in Theorem~\ref{th:sc} in the end of this Section.

Let $\cO$ be a semi-local  ring and set $U:=\spec \cO$.
Let $u_1,\ldots,u_n$ be the closed points of $U$. Let $k(u_i)$ be the residue field of $u_i$.
Consider the reduced closed subscheme $\bu$ of $U$
whose points are $u_1$, \ldots, $u_n$. Thus
$ \bu\cong\coprod_i\spec k(u_i)$.

Let $\bG$ be a simply connected
semi-simple
$U$-group scheme.
By $\bG_{u_i}$ we denote the fibre of $\bG$ over $u_i$.
Let $\bu'\subset\bu$ be the subscheme of all closed points $u\in \bu$ such that the simply connected $k(u)$-group
$\bG_{u}$ is isotropic. Set $\bu''=\bu-\bu'$.
It is possible that $\bu'$ or $\bu''$ is empty.

\begin{prop}
\label{SchemeY22}
Suppose $U$ is connected.
Let $\bG$ be a simply connected
semi-simple
$U$-group scheme of the form $R_{\tilde U/U}(\tilde \bG)$,
where $\tilde U$ is finite \'etale over $U$ and connected,
and the $\tilde U$-group scheme $\tilde \bG$ is simple.
Suppose that $\bu'$ is non-empty. Then the following is true.\\
(i) There is a $U$-scheme $Y'$ finite and \'etale over $U$ and such that \\
\indent (a) the $Y'$-group scheme $\bG_{Y'}:=\bG\times_U Y'$ is isotropic;\\
\indent (b) there is a section $s': \bu'\to Y'$ of $Y'$ over the $\bu'$.\\
(ii) For each $U$-scheme $Y'$ as in the item (i) there is a diagram of the form
$$Y'\xleftarrow{\pi} Y\xrightarrow{j} \mathbb A^1_U$$
with a finite \'{e}tale morphism $\pi$ and a closed $U$-embedding $j$
such that for each $u\in \bu'$ one has $1=\text{g.c.d.}_{v\in Y_u}[k(v):k(u)]$. \\
(iii) Let $Z\subset \P^1_U$ be a closed subscheme finite over $U$.
Then
for each $U$-scheme $Y'$ as in the item (i)
there is a diagram $Y'\xleftarrow{\pi} Y\xrightarrow{j} \mathbb A^1_U$ as in the item (ii) such that
$j(Y)\cap Z=\emptyset$.
\end{prop}

\begin{rem}\label{Y'_over_U'_is a_Nis_neiborh}
The item (i) of the Proposition one can read as follows:
there is a Nisnevich neiborhood $Y'/U$ of the $\bu'$ which makes the
group scheme $\bG$ isotropic and moreover which is finite \'{e}tale over $U$.

If all the residue fields of the scheme $U$ are infinite, then there is an essential simplification
of the item (ii). Namely, in this case there is a closed $U$-embedding $j: Y'\hra \mathbb A^1_U$.
Clearly, $j\circ s': \bu' \to \mathbb A^1_U$ is a section of the projection
$\mathbb A^1_U\to U$ over $\bu'$.

Furthermore in the infinite residue field case the property $j(Y)\cap Z=\emptyset$ can be achieved just
by composing $j$ with an affine translation of the affine line $\mathbb A^1_U$.
\end{rem}

The main aim of this section is to prove this Proposition.
We begin with a general lemma going back to \cite[Lemma 7.2]{OP2}.
\begin{lem}\label{Lemma7_2}
Let $S=\spec R$ be a
semi-local
scheme.
Let $T$ be a closed subscheme of $S$. Let
$\bar {\cX}$
be a closed subscheme of
$\P^d_S=Proj (R[X_0,...,X_d])$
and let
$\cX = \bar {\cX} \cap \mathbb A^d_S$, where
$\mathbb A^d_S$
is the affine space defined by $X_0\neq 0$. Let $\cX_{\infty} = \bar {\cX} - \cX$ be the intersection of
$\bar {\cX}$
with the hyperplane at infinity $X_0 = 0$.
Assume that over T there exists a section
$\delta: T \to \cX$ of the canonical projection $p: \cX \to S$. Assume further that
\begin{itemize}
\item[(1)]$\mathcal X$ is $S$-smooth and equidimensional over $S$,
of relative dimension $r$;
\item[(2)]
for every closed
point $s\in S$ the closed fibres of $\mathcal X_\infty$ and $\mathcal X$
satisfy
$$\dim (\mathcal X_\infty(s))< \dim (\mathcal X(s))=r\;.$$
\item[(3)]
Over $T$ there exists a section $\delta:T\to \mathcal X$
of the morphism $p: \mathcal X\to S$.
\end{itemize}
Then there is a closed subscheme $\tilde S$ of $\mathcal X$ which is finite
\'etale over $S$ and contains $\delta(T)$.
\end{lem}

\begin{proof}
It is clear that the statement of the lemma can be proved independently for every connected component of $S$, so we assume that
$S$ is connected. For a connected semi-local scheme $S$,
the same statement was proved in~\cite[Lemma 4.3]{Pan1} under additional assumptions that $S$ was regular,
all residue fields of $S$ were finite, and $T$ was connected.
The fact that some residue fields of $S$ may be infinite does not change the proof in any way.
The regularity of $S$ was used at one point only, namely, in the reference to~\cite[Lemma 7.3]{OP2}, and the latter
reference can be replaced by the reference to~\cite[Th\'eor\`eme 11.3.8]{EGAIV3}.
The fact that $T$ is not required to be connected is taken into account by not requiring $\tilde S$
to be connected in the conclusion of the lemma. Apart from that, all arguments proving \cite[Lemma 4.3]{Pan1}
carry over to the present situation verbatim.
\end{proof}

The following result is a slight extension of
\cite[Lemma 3.1]{Pan3}.

\begin{lem}\label{OjPan}
Let $S=\spec R$ be a
semi-local
scheme.
Let $T$ be a closed
subscheme of $S$. Let $W$ be a closed subscheme of the projective space
$\P^n_S$.
Assume that over $T$ there exists a section
$\delta:T\to W$
of the canonical projection $W\to S$. Assume further that
$W$ is smooth and equidimensional over $S$,
of relative dimension $r$.
Then there exists a closed subscheme $\tilde S$ of $W$ which is finite
\'etale over
$S$ and contains $\delta(T)$.
\end{lem}

\begin{proof}[Proof of Lemma \ref{OjPan}]
Formally this lemma is a bit different from Lemma \ref{Lemma7_2}.
However it can be derived from Lemma \ref{Lemma7_2}
as follows.
There is a Veronese embedding $\Bbb P^n_S\hookrightarrow \Bbb P^d_S$
and linear homogeneous coordinates $X_0,X_1,...,X_d$ on $\Bbb P^d_S$
such that the following holds:\\
\smallskip
{\rm{(1)}}
if
$W_{\infty}=W\cap \{X_0=0\}$ is the intersection of $W$ with the
hyperplane
$X_0=0$, then for every closed
point $s\in S$ the closed fibres of $W_\infty$
satisfy
$\text{dim}(W_{\infty}(s))<r=\text{dim}(W(s))$; \\
{\rm{(2)}}
$\delta$ maps $T$ into
the closed subscheme of $\Bbb P^d_T$ defined by
$X_1=\dots=X_d=0$.\\
\smallskip
Applying now Lemma \ref{Lemma7_2}
to $\bar{\mathcal X}=W$, $\mathcal X_{\infty}=W_{\infty}$ and $\mathcal X=W-W_{\infty}$
we get a closed subscheme $\tilde S$ of $W-W_{\infty}$ which is finite
\'etale over
$S$ and contains $\delta(T)$. Since $\tilde S$ is finite over $S$ it is closed in $W$ as well.
\end{proof}

\begin{proof}[Proof of Proposition \ref{SchemeY22}]
First prove the item (i). We use the following notions introduced in~\cite[Exp. XXVI, \S 3]{SGA3}.
Let $q: \cP \to U$ be the $U$-scheme of all parabolic subgroup schemes of $\bG$.
Let $\pi: T\to U$ by the scheme of types of all parabolic subgroup schemes of $\bG$.
Let $\cP\xra{r} T\xra{\pi} U$ be the Stein decomposition of the morphism $q$~\cite[Exp. XXVI, Corollaire 3.5]{SGA3}.
Recall that $\pi$ is finite, \'{e}tale surjective morphism and $r$ is smooth projective
with connected fibres.

Write $\tilde \bu'$ for the pre-image of $\bu'$ in $\tilde U$.
Since $\bG_{\bu'}$ is isotropic it follows that $\tilde \bG_{\tilde \bu'}$ is isotropic
by Lemma~\ref{strictly proper_parabolic}.
Thus, there is a strictly proper parabolic $\tilde \bP'_{\tilde \bu'}$
in the simple $\tilde \bu'$-group scheme $\tilde \bG_{\tilde \bu'}$.
Put $\bP_{\bu'}=R_{\tilde \bu'/\bu'}(\tilde \bP'_{\tilde \bu'})$.
Lemma \ref{strictly proper_parabolic} yields that it is a strictly proper parabolic in
$\bG_{\bu'}=R_{\tilde \bu'/\bu'}(\tilde \bG'_{\tilde \bu'})$.

The parabolic $\bP'_{\bu'}$ in $\bG_{\bu'}$ is
a section $\delta: \bu' \to \cP$ of the morphism $q$ over the $\bu'$. Thus,
$r\circ \delta$ is a section of the morphism $\pi$ over the $\bu'$.
In particular, the morphism $t:=r\circ \delta: \bu' \to T$ is a closed embedding.

For each point $u_i$ in $\bu'$ put $p_i=\delta(u_i)$ and $t_i=t(u_i)$.
Let $T'_i\subset T$ be the connected component of $T$ containing the closed point $t_i$.
Put $\cP_i=r^{-1}(T_i)\subset \cP$ and
$r_i:=r|_{\cP_i}: \cP_i \to T_i$.
The morphism
$$r_i: \cP_i \to T_i$$
is smooth projective  over $T_i$
(see~\cite[Cor.~3.5, Exp.~XXVI]{SGA3}), and hence equidimensional.

By Lemma~\ref{OjPan}
we can find a closed subscheme $Y'_i\subset \cP_i$ such that $Y'_i$ is finite \'etale over $T_i$
and $p_i=[\delta\circ \pi](t_i)\subset Y'_i$.
That is $\delta(u_i)$ is in $Y'_i$.
For each $Y'_i\subset \cP_i$ let $Y_i$ be its unique connected component containing the point $p_i$.
Recall that $\bP_{\bu'}=R_{\tilde \bu'/\bu'}(\tilde \bP'_{\tilde \bu'})$ is a strictly proper parabolic in
$\bG_{\bu'}=R_{\tilde \bu'/\bu'}(\tilde \bG'_{\tilde \bu'})$. This yields the following \\
{\it Claim}.
The $Y_i$-group scheme $\bG_{Y_i}$ is isotropic.\\
Prove this Claim. Let $\bP(Y_i)$ be the parabolic subgroup scheme of $\bG_{Y_i}$
corresponding to the closed subscheme $Y_i$ of $\cP$.
Since $Y_i$ is connected, by~\cite[Exp. XXII, Proposition 2.8, Lemme 5.2.7]{SGA3} it suffices to check that at the point $p_i$
the parabolic $\bP(Y_i)_{p_i}$ in $(\bG_{Y_i})_{p_i}=\bG_{p_i}$
is a strictly proper parabolic.
Since $p_i=\delta(u_i)$,
we know that at the point $p_i$ the group equals to
$R_{\tilde \bu_i/u_i}(\tilde \bG_{\tilde \bu_i})$
and its parabolic equals
$R_{\tilde \bu_i/u_i}(\tilde \bP_{\tilde \bu_i})$.
By Lemma \ref{strictly proper_parabolic} this parabolic is
a strictly proper parabolic
of the group
$R_{\tilde \bu_i/u_i}(\tilde \bG_{\tilde \bu_i})$.
This proves the Claim.

The scheme $Y_i$ is finite, \'{e}tale over $U$ and
$\delta: u_i \to p_i\in Y_i$ is a point of $Y_i$.
Put $Y'=\sqcup_{u_i\in \bu'}Y_i$.
By the above Claim the $Y'$-group scheme $\bG_{Y'}:=\bG\times_U Y'$ is isotropic.
By construction $Y'$ is finite and \'etale over $U$, and provided with the section
$\delta: \bu' \to Y'$. The item (i) of Proposition is proved.

It remains to prove the items (ii) and (iii).
It is done in the Appendix. Specifically, as explained in Remark \ref{222yields22},
Proposition \ref{SchemeY222} applies to the situation of Proposition \ref{SchemeY22}, and yields
the remaining claims. Proposition \ref{SchemeY222} itself is proved in the Appendix.

\end{proof}

\begin{thm}\label{th:sc}
Suppose $U$ is connected.
Let $\bG$ be a simply connected
semi-simple
$U$-group scheme of the form $R_{\tilde U/U}(\tilde \bG)$,
where $\tilde U$ is finite \'etale over $U$ and connected,
and the $\tilde U$-group scheme $\tilde \bG$ is simple.
Let $\cG$ be a principal $\bG$-bundle
over $\P^1_U$ whose restriction to $\infty \times U$ is trivial. Then the following is true. \\
(i) If $\bG$ is anisotropic over every $u\in\bu$ (i.e. $\bu'$ is empty), then the $\bG$-bundle $\cG$ is trivial.
\\
(ii) If $\bu'$ is non-empty,
then there are a scheme $Y'$ and a diagram
$Y'\xleftarrow{\pi} Y\xrightarrow{j} \mathbb A^1_U$
as in the item (ii) of Proposition \ref{SchemeY22}.
For any such $Y'$ and $(\pi,j)$
the restrictions $\cG|_{\P^1_U-j(Y)}$ and $\cG|_{j(Y)}$ of the $\bG$-bundle $\cG$ are trivial
($j(Y)$ is closed in $\P^1_U$ since $Y$ is finite over $U$).\\
(iii) Assume that $\bu'$ is non-empty, and let $Z\subset \P^1_U$ be a closed subscheme finite over $U$.
Then there are $Y'$
and a diagram
$Y'\xleftarrow{\pi} Y\xrightarrow{j} \mathbb A^1_U$
as
in the item (ii) of this theorem such that $j(Y)\cap Z=\emptyset$.
\end{thm}

\begin{proof}[Proof of Theorem \ref{th:sc}]

If $\bu'$ is empty (i.e. $\bG$ is anisotropic over $\bu$), then the bundle $\cG_{\bu}$ is trivial by
the Gille theorem
\cite[Th\'eor\`eme 3.8]{GilleTorseurs}.
Since the bundle
$\cG_\bu$ is trivial, by Theorem~\ref{th:tsy}
the $\bG$-bundle $\cG$
is extended from $U$ via the pull-back along the projection
$pr_U: \P^1_U\to U$. By assumption $\cG|_{\infty\times U}$ is trivial, hence $\cG$ is trivial.

Suppose $\bu'$ is non-empty.
Take a finite \'etale $U$-scheme $Y$ as in Proposition \ref{SchemeY22}.
We apply Theorem~\ref{th-14.03} to the scheme $Y$, the $Y$-group scheme
$\bG_Y$ and $\mathcal G_Y=\cG \times_U Y$ over $\mathbb P^1_Y=\mathbb P^1_U\times_U Y$. By the item (i)
of Proposition \ref{SchemeY22}
the $Y$-group scheme $\bG_Y$ is isotropic and the $\bG_Y$-bundle $\mathcal G_Y$ over $\mathbb P^1_Y$
is such that
its restriction to $\infty \times Y$ is trivial.
By Theorem~\ref{th-14.03} the bundle
 $\cG_{Y}|_{\mathbb A^1_Y}=(\cG\times_U Y)|_{\mathbb A^1_U \times_U Y}$ is trivial.

Recall that by Proposition \ref{SchemeY22} there is a closed $U$-embedding $j: Y \hra \mathbb A^1_U$.
Consider the commutative diagram
\begin{equation}\label{base-change-via-Y}
    \xymatrix{
Y \ar[drr]_{j}\ar[rr]^{(j,id_Y)} && \mathbb A^1_U \times_U Y \ar[d]_{p} \ar[rr]^{Inc} && \P^1_U \times_U Y \ar[d]_{P}\\
&& \mathbb A^1_U \ar[rr]^{inc} && \P^1_U \\
}
\end{equation}
The $\bG_Y$-bundle $\mathcal G_Y|_{\mathbb A^1_Y}=(P\circ Inc)^*(\mathcal G)$ is trivial.
Thus the $\bG$-bundle
$$
\cG|_{j(Y)}=(inc\circ j)^*(\cG)= (j,id_Y)^*((P\circ Inc)^*(\mathcal G))
$$
is trivial.

It remains to prove that $\cG|_{\P^1_U-j(Y)}$ is trivial.
We apply Lemma~\ref{lem:isotr} to the group scheme $\bG$, the closed subscheme $Z=j(Y)$ of $\P^1_U$, and the section
$s:U\to\P^1_U$ given by the natural isomorphism between $U$ and $\infty\times U\subseteq\P^1_U-j(Y)$. Let us check
all the conditions of that Lemma.
The group $\bG|_{j(Y)}$ is isotropic, since $\bG_Y$ is isotropic by the choice of $Y$.
We have proved above that $\cG|_{j(Y)}$ is trivial.
Also, by Proposition \ref{SchemeY22} for
each $u\in \bu'$ one has $1=\text{g.c.d.}_{v\in Y_u}[k(v):k(u)]$.
Hence we have that
$\text{Pic}(\P^1_{\bu'}-j(Y)_{\bu'})$ is trivial. Then
by the Gille theorem \cite[Corollaire 3.10]{GilleTorseurs}
$\cG|_{\P^1_{\bu'}-j(Y)_{\bu'}}$ is trivial. On the other hand, for every $u\in\bu-\bu'$ the group
$\bG_u$ has no proper parabolic subgroups, since otherwise it would have been isotropic by Lemma~\ref{strictly
proper_parabolic}. Then by the same result of Gille the bundle $\cG|_{\P^1_{\bu-\bu'}}$ is trivial. Consequently,
$\cG|_{\P^1_{\bu}-j(Y)_{\bu}}$ is trivial.  Finally, by assumption of the Theorem we have that
$\cG|_{s(U)}=\cG|_{\infty\times U}$ is trivial.  Thus, all conditions of Lemma~\ref{lem:isotr} are satisfied,
and we conclude that $\cG|_{\P^1_U-j(Y)}$ is trivial.
The theorem is proved.

\end{proof}

\section{Proof of Theorem~\ref{th:inf} and its corollaries}\label{semi-simple}

The aim of this Section is to prove the mentioned theorem and to deduce some corollaries.

For the time being, assume that $U$ is a connected semi-local  affine scheme,
the same as in Proposition \ref{SchemeY22}. Fix a closed subscheme $Z\subset\P^1_U$ finite over $U$.

Let $\bG$ be a simply connected semi-simple $U$-group scheme.
By~\cite[Exp. XXIV, Proposition 5.10]{SGA3} $\bG$ is isomorphic to a direct product
$\prod_{i\in I}\bG_i$ of semi-simple simply connected
$U$-group schemes of the form
$R_{U_i/U}(\bG_i)$,
where each $U_i$ is finite, \'{e}tale over $U$ and connected,
and each $\bG_i$ is simply connected simple $U_i$-group scheme.
We are going to apply Proposition \ref{SchemeY22} to each of the group schemes $\bG_i$ in order
to state and prove Proposition \ref{SchemeY22_new} below.

For each $i\in I$ let $\bu'_i\subset\bu$ be the subset consisting of all closed points $u\in \bu$ such that the simply connected $k(u)$-group
$\bG_{i,u}$ is isotropic.
Let $I'\subset I$ be a subset consisting of all $i\in I$ with non-empty set $\bu'_i$.

First suppose $I'\neq \emptyset$.
For each $i\in I'$
let $Y'_i$ be a finite and \'etale $U$-scheme as in the item (i) of Proposition \ref{SchemeY22}
(applied to the $U$-group scheme $\bG_i$).
In particular, the $Y'_i$-group scheme $\bG_{i,Y'_i}$ is isotropic.

Take a total ordering  on the set $I'$. Applying Proposition \ref{SchemeY22} (iii) to
the schemes $Y'_1$ and $Z$ find
a diagram of the form
$Y'_1\xleftarrow{\pi_1} Y_1\xrightarrow{j_1} \mathbb A^1_U$
such that for each $u\in \bu'_1$ one has $1=\text{g.c.d.}_{v\in Y_{1,u}}[k(v):k(u)]$
and $j_1(Y_1)\sqcup Z=\emptyset$.

Applying Proposition \ref{SchemeY22} (iii) to
the schemes $Y'_2$ and $j_1(Y_1)\sqcup Z$ find
a diagram of the form
$Y'_2\xleftarrow{\pi_2} Y_2\xrightarrow{j_2} \mathbb A^1_U$
such that for each $u\in \bu'_2$ one has $1=\text{g.c.d.}_{v\in Y_{2,u}}[k(v):k(u)]$
and $j_2(Y_2)\cap [j_1(Y_1)\sqcup Z]=\emptyset$.

Continuing this procedure
one can find diagrams
$Y'_i\xleftarrow{\pi_i} Y_i\xrightarrow{j_i} \mathbb A^1_U$
($i\geq 3$)
such that $1=\text{g.c.d.}_{v\in Y_{i,u}}[k(v):k(u)]$ for each $u\in \bu'_i$.
Moreover for
$Y=\sqcup_{i\in I'} Y_i$
the morphism
$$j=\sqcup_{i\in I'}j_i: Y\to \mathbb A^1_U$$
is a closed embedding and $j(Y)\cap Z=\emptyset$.
Put $Y'=\sqcup_{i\in I'}Y'_i$ and
$$
\pi=\sqcup_{i\in I'}\pi_i: Y\to Y'.
$$
If $I'=\emptyset$, then put $Y'=\emptyset=Y$.
\begin{prop}
\label{SchemeY22_new}
Let $U$, $Z\subset \P^1_U$ and \ $\bG=\prod_{i\in I}\bG_i$ be as above in this section.
Then for the diagram $Y'\xleftarrow{\pi} Y\xrightarrow{j} \mathbb A^1_U$ constructed just above the following is true \\
(i) the morphism $\pi$ is finite \'{e}tale and the morphism $j$ is a closed embedding; \\
(ii) for each $i\in I$  with non-empty set $\bu'_i$ the $Y_i$-group scheme $\bG_{i,Y_i}$ is isotropic;\\
(iii) if $\bu'_i\neq \emptyset$, then $1=\text{g.c.d.}_{v\in Y_{i,u}}[k(v):k(u)]$ for each $u\in \bu'_i$; \\
(iv) $j(Y)\cap Z=\emptyset$.

We will often write $Y(\bG)$ for the $U$-scheme $Y$ to indicate that $Y$ depends on $\bG$.
(Following the same principle we have to write $Y'(\bG)$ for the $U$-scheme $Y'$,
$\pi_{\bG}$ for $\pi$ and $j_{\bG}$ for $j$. However, we will not do that.)
\end{prop}

\begin{proof}
The items (i),(iii),(iv) are clear by the very construction of the diagram $(\pi,j)$.
The item (ii) is true since for each $i\in I$  with non-empty the set $\bu'_i$ already
the $Y'_i$-group scheme $\bG_{i,Y'_i}$ is isotropic.
\end{proof}

\begin{thm}\label{th:inf_ssc}
Let $U$, $Z$ and $\bG=\prod_{i\in I}\bG_i$ be as just above.
Let $Y'\xleftarrow{\pi} Y(\bG)\xrightarrow{j} \mathbb A^1_U$ be the diagram
as in Proposition \ref{SchemeY22_new}.
Then for any
principal $\bG$-bundle $\cG$ over $\P^1_U$ with $\cG|_{\infty\times U}$ trivial, the bundle
$\cG|_{\P^1_U-j(Y(\bG))}$ is also trivial and $j(Y(\bG))\cap Z=\emptyset$.
\end{thm}

\begin{rem}
Let $p_i: \bG\to \bG_i$ be the projection and $\cG_i$ be the $\bG_i$-bundle
$(p_i)_*(\cG)$.
It is sufficient to check that for each $i\in I$ the $\bG_i$-bundle
$\cG_i|_{\P^1_U-j(Y(\bG))}$ is trivial.
\end{rem}

\begin{proof}[Proof of Theorem \ref{th:inf_ssc}]
If $i\in I-I'$
(that is $\bu'_i=\emptyset$),
then by the first item of Theorem \ref{th:sc} the $\bG_i$-bundle $\cG_i$
is trivial.
Thus, the $\bG_i$-bundle $\cG_i|_{\P^1_U-j(Y(\bG))}$ is trivial.
It remains to check the case of $i\in I'$.
So, we have to check that the $\bG_i$-bundle
$\cG_i|_{\P^1_U-j(Y(\bG))}$ is trivial in this case.
Recall that $Y(\bG)=\sqcup_{i\in I'} Y_i$ and $j=\sqcup_{i\in I'}j_i: Y(\bG)\to \mathbb A^1_U$.
It suffices to check that the $\bG_i$-bundle
$\cG_i|_{\P^1_U-j(Y_i)}=\cG_i|_{\P^1_U-j_i(Y_i)}$ is trivial. The latter is the case by
Theorem \ref{th:sc}.
The equality
$j(Y(\bG))\cap Z=\emptyset$
holds by Proposition \ref{SchemeY22_new}(iv).
\end{proof}

\begin{proof}[Proof of Theorem \ref{th:inf}]
The claim of Theorem \ref{th:inf} can be proved independently for every connected component of $U$, so we assume that
$U$ is connected.
By Theorem~\ref{th:inf_ssc}, for any closed subscheme $Z\subset\P^1_U$ finite over $U$, there is
a closed subscheme $Y=j(Y(\bG))$ of $\P^1_U$, which is finite and \'etale over $U$, contained in
$\mathbb A^1_U$, satisfies $Z\cap Y=\emptyset$, and such that
all principal $\bG$-bundles over $\P^1_U$ having trivial restriction to $\infty\times U$,
have trivial restriction to $\P^1_U-Y$. Applying the same Theorem~\ref{th:inf_ssc} to the case $Z=Y$,
we find another closed subscheme $Y'$ of $\P^1_U$, such that $Y\cap Y'=\emptyset$ and
all principal $\bG$-bundles over $\P^1_U$ having trivial restriction to $\infty\times U$,
also have trivial restriction to $\P^1_U-Y'$.
It follows that all such $\bG$-bundles also have trivial restriction to $Y$, since it is contaned in $\P^1_U-Y'$. This settles the item $(i)$
of Theorem~\ref{th:inf}. It remains to note that
the isomorphism
classes of $\bG$-bundles over $\P^1_U$ having trivial restriction to $Y$ and $\P^1_U-Y$ are in bijective
correspondence with the set of double cosets
$$
\bG(\P^1_U-Y)\setminus \bG(\dot{Y}^h)\ /\ \bG(Y^h),
$$
since all such bundles are obtained by gluing two trivial $\bG$-bundles over $\P^1_U-Y$ and $Y^h$
by means of an isomorphism taken from $\bG(\dot{Y}^h)$; see~\cite[\S 5]{FP} for the details.
\end{proof}

\begin{proof}[Proof of Theorem \ref{cor:inf_1}]
The existence of $Y'$ and $Y''$ follows from Theorem~\ref{th:inf}.
The set of isomorphism classes of principal $\bG$-bundles
$\cG$ over $\P^1_U$ trivial at infinity equals to the double coset
$$
\bG(\P^1_U-Y')\setminus \bG(\Pro^1_U-(Y'\cup Y''))\ /\ \bG(\P^1_U-Y''),
$$
since $\P^1_U-Y'$ and $\P^1_U-Y''$ constitute a Zariski cover of $\P^1_U$.
\end{proof}

The following theorem is an easy corollary of the above statements.

\begin{thm}\label{th:main}
Let $U$, $\bG$ be as in Theorem~\ref{th:inf}
and $E$ be principal $\bG$-bundle over $U$.
Let $Z\subset \P^1_U$ be a a closed subscheme finite over $U$.
Then there is a closed subscheme $Y\subset\mathbb A^1_U$
which is finite and \'etale over $U$, such that
$Y\cap Z=\emptyset$ and
for any
$\bG$-bundle $\cG$ over $\P^1_U$ with $\cG|_{\infty\times U}\cong E$
the $\bG$-bundle
$\cG|_{\P^1_U-Y}$ is extended from $U$.

Moreover, $\cG|_{\P^1_U-Y}$ is the extension of the principal $\bG$-bundle $E$.
\end{thm}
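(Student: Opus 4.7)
The plan is to reduce Theorem~\ref{th:main} to its special case $E=\triv$, which is exactly Theorem~\ref{th:inf}, by twisting the whole situation by $E$. Beyond Theorem~\ref{th:inf} itself, everything is a formal application of the inner twisting formalism for principal bundles. Concretely, I would let $p\colon\P^1_U\to U$ denote the projection and form the pulled-back principal $\bG$-bundle $p^*E$ on $\P^1_U$. For an arbitrary principal $\bG$-bundle $\cG$ on $\P^1_U$, I would then consider the sheaf
\[
\cG':=\operatorname{Isom}_{\bG}\bigl(p^*E,\cG\bigr),
\]
which, with its natural right action of $\bG(E)=\operatorname{Aut}_{\bG}(E)$ by precomposition, is a principal $\bG(E)$-bundle on $\P^1_U$. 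The assignment $\cG\mapsto \cG'$ is a standard equivalence between the groupoids of principal $\bG$-bundles and principal $\bG(E)$-bundles on $\P^1_U$; it sends $p^*E$ to the trivial $\bG(E)$-bundle and, more generally, for any open $V\subset\P^1_U$ has the property that $\cG'|_V$ is trivial if and only if $\cG|_V\cong (p^*E)|_V$ as $\bG$-bundles. In particular, the hypothesis $\cG|_{\infty\times U}\cong E$ translates into triviality of $\cG'|_{\infty\times U}$.

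Next, $\bG(E)$ is an inner form of the reductive $U$-group scheme $\bG$ and is therefore itself reductive, so Theorem~\ref{th:inf} applies to it. I would invoke that theorem for $\bG(E)$ together with the same closed subscheme $Z\subset\P^1_U$, obtaining a closed subscheme $Y\subset\mathbb A^1_U$, finite \'etale over $U$ and disjoint from $Z$, such that every principal $\bG(E)$-bundle on $\P^1_U$ trivial along $\infty\times U$ restricts trivially to $\P^1_U-Y$; this $Y$ would be the output. Given any $\cG$ with $\cG|_{\infty\times U}\cong E$, the associated $\cG'$ is trivial at infinity, hence $\cG'|_{\P^1_U-Y}$ is trivial; unwinding the twisting equivalence on $\P^1_U-Y$ yields a $\bG$-equivariant isomorphism $\cG|_{\P^1_U-Y}\cong q^*E$, where $q:=p|_{\P^1_U-Y}$, which gives at once both the assertion that $\cG|_{\P^1_U-Y}$ is extended from $U$ and the ``moreover'' identification of the extension with $E$.

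The only genuinely non-formal input is Theorem~\ref{th:inf} itself; the remainder is routine bookkeeping with inner twists, the sole care point being the observation that $\bG(E)$ is still reductive so that Theorem~\ref{th:inf} is applicable in the twisted setting.
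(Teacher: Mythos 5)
Your proposal is correct and follows essentially the same route as the paper: the paper likewise twists by $E$ (via the bijections $Twist^E_{\P^1\times U}$, $Twist^E_U$), applies the conclusion of Theorem~\ref{th:inf} to the reductive inner form $\bG(E)$ to get triviality of the twisted bundle on $\P^1_U-Y$, and untwists, reading off the ``moreover'' part from $\infty\times U\subset\P^1_U-Y$. The only cosmetic difference is that the paper re-traverses the construction of $Y$ for $\bG(E)$ (via $\bG(E)^{sc}_{der}$, $S$, $S'$) instead of citing Theorem~\ref{th:inf} as a black box, but the content is identical.
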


\begin{notation}\label{G(E)}
Let $U$, $\bG$ be as in Theorem~\ref{th:inf}.
Let $E$ be principal $\bG$-bundle over $U$.
Write $\bG(E)$ for the inner twisted form of the $U$-group scheme $\bG$ corresponding to the principal
$\bG$-bundle $E$. For a principal $\bG$-bundle $\cG$ over $\P^1_U$
we will write $\cG|_{\infty\times U}\cong E$ if
$\cG|_{\infty\times U}$ and $E$ are isomorphic as the principal $\bG$-bundles over $U$.
\end{notation}

\begin{proof}[Proof of Theorem \ref{th:main}]
The claim of Theorem~\ref{th:main} can be proved independently for every connected component of $U$, so we assume that
$U$ is connected. From now on, we use the same notation as in Proposition \ref{SchemeY22_new},
and Notation~\ref{G(E)}.

{\it Claim}. Let $\cG$ be a principal $\bG$-bundle over $\P^1_U$ with $\cG|_{\infty\times U}\cong E$.
Then the $\bG$-bundle
$\cG|_{\P^1_U-j(Y(\bG(E)))}$ is extended from $U$ and
it is the extension of the $\bG$-bundle $E$.
Moreover, $j(Y(\bG))\cap Z=\emptyset$.\\
Clearly, this Claim yields Theorem \ref{th:main}.
It remains to prove the Claim. We will do this in the remaining part of the proof.
Consider the following set bijections
$$H^1_{et}(\P^1_U,\bG) \xrightarrow{Twist^E_{\P^1\times U}} H^1_{et}(\P^1_U,\bG(E)),$$
$$H^1_{et}(U,\bG) \xrightarrow{Twist^E_U} H^1_{et}(U,\bG(E)).$$
Put
$\cG(E)=Twist^E_{\P^1\times U}(\cG)$.
Clearly,
$\cG(E)|_{\infty\times U}$
is a trivial $\bG(E)$-bundle.
By Theorem \ref{th:inf_ssc}
the $\bG(E)$-bundle
$\cG(E)|_{\P^1_U-j(Y(\bG(E)))}$ is trivial.
In particular, the $\bG(E)$-bundle \\
$\cG(E)|_{\P^1_U-j(Y(\bG(E)))}$ is extended from $U$.
Thus, the the $\bG$-bundle
$\cG|_{\P^1_U-j(Y(\bG(E)))}$ is also extended from $U$.
Since $\infty\times U\subset \P^1_U-j(Y(\bG(E)))$ it follows that
$\cG|_{\P^1_U-j(Y(\bG(E)))}$ is the extension of the $\bG$-bundle $E$.
By Proposition \ref{SchemeY22_new}(iv) one has
$Z\subset \P^1_U-j(Y(\bG(E)))$,
the Claim is proved.
The theorem is proved.

\end{proof}

Theorem~\ref{th:main} has the following immediate corollaries which will be used later.

\begin{cor}
Let $U$, $\bG$, $E$ be as in Theorem~\ref{th:main}.
Then there are two closed subschemes $Y'$ and $Y''$ in $\mathbb A^1_U$,  finite and \'etale over $U$, such that $Y'\cap Y''=\emptyset$ and
for any
principal $\bG$-bundle $\cG$ over $\P^1_U$ with $\cG|_{\infty\times U}\cong E$ the bundles
$\cG|_{\P^1_U-Y'}$, $\cG|_{\P^1_U-Y''}$ are extended from $U$.

Moreover,
they both are extensions of the $\bG$-bundle $E$.
\end{cor}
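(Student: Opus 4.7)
\medskip

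The plan is to derive this corollary by applying Theorem~\ref{th:main} twice, in exact parallel with how Corollary~\ref{cor:inf_1} is obtained from Theorem~\ref{th:inf}.

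First I would invoke Theorem~\ref{th:main} with the prescribed data $(U,\bG,E)$ and the choice $Z=\emptyset$. This produces a closed subscheme $Y'\subset \mathbb A^1_U$, finite and \'etale over $U$, such that for every principal $\bG$-bundle $\cG$ over $\P^1_U$ with $\cG|_{\infty\times U}\cong E$, the restriction $\cG|_{\P^1_U-Y'}$ is extended from $U$, and in fact is the extension of $E$.

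Next I would apply Theorem~\ref{th:main} a second time, now with the same triple $(U,\bG,E)$ but taking $Z:=Y'$. Observe that $Y'$ is closed in $\mathbb A^1_U$ (hence in $\P^1_U$) and finite over $U$, so it is a legitimate choice of $Z$ in Theorem~\ref{th:main}. The theorem then yields a closed subscheme $Y''\subset \mathbb A^1_U$, finite and \'etale over $U$, satisfying $Y''\cap Y'=\emptyset$ and with the analogous property: for every $\cG$ with $\cG|_{\infty\times U}\cong E$, the restriction $\cG|_{\P^1_U-Y''}$ is extended from $U$ and is the extension of $E$. Combining the two applications, the pair $(Y',Y'')$ has all the required properties.

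Since the proof is a straightforward double application of Theorem~\ref{th:main}, there is no real obstacle to address here; the substantive work has already been absorbed into Theorem~\ref{th:main}. The only point worth checking is that $Y'$ is indeed an admissible closed subscheme $Z\subset \P^1_U$ finite over $U$ in the second application, which is immediate from the fact that $Y'$ is produced by the first application as a closed subscheme of $\mathbb A^1_U\subset \P^1_U$, finite (and \'etale) over $U$.
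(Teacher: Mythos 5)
Your proposal is correct and is exactly the argument the paper intends: the corollary is stated as an immediate consequence of Theorem~\ref{th:main}, obtained by applying that theorem first with $Z=\emptyset$ to get $Y'$ and then with $Z=Y'$ to get $Y''$, just as Corollary~\ref{cor:inf_1} follows from Theorem~\ref{th:inf}. Your check that $Y'$ is an admissible choice of $Z$ (being finite over $U$ and hence closed in $\P^1_U$) is the only point requiring verification, and you handle it correctly.
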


\begin{cor}\label{two_sections}
Let $U$ and $\bG$ be as in Theorem~\ref{th:main}. For any
principal $\bG$-bundle $\cG$ over $\P^1_U$, for any two sections $s_1,s_2:U\to\P^1_U$ of the projection
$\P^1_U\to U$, the pull-backs $s_1^*(\cG)$ and $s_2^*(\cG)$ are isomorphic as principal $\bG$-bundles over $U$.
\end{cor}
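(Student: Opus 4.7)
The plan is to deduce this corollary as a direct application of Theorem~\ref{th:main}. First I set $E:=\cG|_{\infty\times U}$, which is a principal $\bG$-bundle over $U$; tautologically $\cG|_{\infty\times U}\cong E$, so $\cG$ satisfies the hypothesis of Theorem~\ref{th:main} with this choice of $E$.

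Next I take $Z:=s_1(U)\cup s_2(U)\subset\P^1_U$. Since the projection $\P^1_U\to U$ is separated, each section $s_i$ is a closed immersion and $s_i(U)\to U$ is an isomorphism, so $Z$ is a closed subscheme of $\P^1_U$ that is finite over $U$. Applying Theorem~\ref{th:main} to this $Z$ and $E$ yields a closed subscheme $Y\subset\mathbb A^1_U$, finite and \'etale over $U$, with $Y\cap Z=\emptyset$, such that $\cG|_{\P^1_U-Y}$ is the extension of $E$ from $U$; that is, $\cG|_{\P^1_U-Y}\cong p^*E$, where $p\colon\P^1_U-Y\to U$ denotes the projection.

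Since $Y\cap s_i(U)=\emptyset$ for $i=1,2$, each $s_i$ factors through the open subscheme $\P^1_U-Y$, giving $\tilde s_i\colon U\to\P^1_U-Y$ with $p\circ\tilde s_i=\id_U$. I then conclude via
\[
s_i^*(\cG)=\tilde s_i^*\bigl(\cG|_{\P^1_U-Y}\bigr)\cong\tilde s_i^*\,p^*E=(p\circ\tilde s_i)^*E=E,
\]
so $s_1^*(\cG)\cong E\cong s_2^*(\cG)$, as required.

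All the nontrivial content of the corollary is encoded in Theorem~\ref{th:main}; there is no serious obstacle in the deduction above. The only point worth verifying explicitly is that $Z=s_1(U)\cup s_2(U)$ is closed in $\P^1_U$ and finite over $U$, which is immediate from separatedness of $\P^1_U\to U$.
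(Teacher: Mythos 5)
Your proof is correct and is exactly the intended ``immediate'' deduction from Theorem~\ref{th:main}: set $E=\cG|_{\infty\times U}$, take $Z=s_1(U)\cup s_2(U)$ (closed and finite over $U$ since sections of the separated projection are closed immersions), and use that $\cG|_{\P^1_U-Y}$ is the extension of $E$ with both sections factoring through $\P^1_U-Y$. The paper gives no separate argument for this corollary, so there is nothing further to compare.
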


\section{On principal $\bT$-bundles over $\P^1_U$}\label{sect:t_sect}

The aim of this section is to prove the following theorem on toral bundles over $\P^1_U$.

\begin{thm}\label{T_bundle_main}
Let $U$ be a semi-local  scheme and $Z\subset\P^1_U$ be a closed subscheme finite over $U$.
Then the following is true \\
(i) there are closed subschemes $S$ and $S'$ in $\mathbb A^1_U$ finite and \'{e}tale over $U$ such that \\
\indent (a) $S\cap S'=\emptyset$; \\
\indent (b) the degrees $[S:U]$ and $[S':U]$ are co-prime integers; \\
\indent (c) $Z\cap(S\sqcup S')=\emptyset$. \\
(ii) for each torus $\bT$ over $U$ and each principal $\bT$-bundle $\cT$ over $\P^1_U$
and each pair $S$, $S'$ as in the item (i)
the $\bT$-bundle $\cT|_{\P^1_U-(S\sqcup S')}$ is extended from $U$.
\end{thm}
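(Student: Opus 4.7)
For part (i), I would use a Chinese Remainder Theorem construction. Fix coprime integers $m, m' \geq 2$ chosen large enough that at every closed point $u \in \bu$ of $U$ there exist monic separable polynomials $f_u \in k(u)[t]$ of degree $m$ and $g_u \in k(u)[t]$ of degree $m'$ which are mutually coprime and have no roots in the fiber $Z_u \cap \mathbb A^1_{k(u)}$. (A counting argument over each residue field: roughly $|k(u)|^d$ monic separable polynomials of degree $d$ versus a bounded number of forbidden configurations.) By CRT for the semi-local ring $\cO$, lift the families $(f_u)$ and $(g_u)$ to monic $F, G \in \cO[t]$, and set $S = V(F)$, $S' = V(G) \subset \mathbb A^1_U$. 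Finiteness of degrees $m, m'$ is immediate; étaleness holds because the resultant $\mathrm{Res}(F,F') \in \cO$ is a unit modulo every maximal ideal of $\cO$ and hence a unit; disjointness $S \cap S' = \emptyset = (S \sqcup S') \cap Z$ follows because these intersections are closed subschemes of $\mathbb A^1_U$ finite over $U$ with empty closed fibers, so their images in the semi-local $U$ contain no closed point of $U$ and must be empty.

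For part (ii), I would proceed in three steps. Step 1: Since $\bT$ is abelian, replacing $\cT$ by $\cT \otimes \pi^*(\cT|_{\infty \times U})^{-1}$ reduces to the case $\cT|_{\infty \times U}$ trivial, and it then suffices to show $\cT|_{\P^1_U - W}$ is trivial, where $W = S \sqcup S'$. Step 2: Pick a connected finite étale Galois cover $\tilde U \to U$ with group $\Gamma$ splitting $\bT$, so $\bT_{\tilde U} \cong \mathbb G_m^n$. Using $\Pic(\tilde U) = 0$ (semi-local regular) together with the Hochschild--Serre spectral sequence for $\tilde U/U$ and the splitting of $\pi^*$ by the $\infty$-section, one identifies trivial-at-$\infty$ $\bT$-bundles on $\P^1_U$ with $X_*(\bT)^\Gamma$ via $\chi \mapsto \chi_*(\cO(1))$. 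Hence $\cT \cong \chi_*(\cO(1))$ for some $\chi \in X_*(\bT)^\Gamma$, where $\chi$ is a cocharacter $\mathbb G_m \to \bT$ defined over $U$. Step 3: Pick $a, b \in \Z$ with $am + bm' = 1$. The rational function $F^a G^b$ is a nowhere-vanishing regular section of $\cO(1)$ on $\P^1_U - W$ (as a section of $\cO(1)$ its divisor is $a[S] + b[S']$, supported in $W$), trivializing $\cO(1)|_{\P^1_U - W}$. Applying the cocharacter, $\chi(F^a G^b) \in \bT(\P^1_U - W)$ trivializes $\chi_*(\cO(1))|_{\P^1_U - W} = \cT|_{\P^1_U - W}$.

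The main obstacle is the Galois descent classification in Step 2: identifying trivial-at-$\infty$ $\bT$-bundles with $X_*(\bT)^\Gamma$. The essential ingredients are $\bT(\P^1_{\tilde U}) = \bT(\tilde U)$ (since $\bT$ is affine and $\P^1$ proper) and $\Pic(\tilde U) = 0$, so that $H^1(\P^1_{\tilde U}, \bT_{\tilde U})^\Gamma = (\Z^n)^\Gamma = X_*(\bT)^\Gamma$. Hochschild--Serre then yields the injection $\ker(\infty^*) \hookrightarrow X_*(\bT)^\Gamma$, and the explicit section $\chi \mapsto \chi_*(\cO(1))$ to this injection makes it an isomorphism. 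Once this classification is in hand, the trivialization in Step 3 is automatically $\Gamma$-equivariant because $F, G$ are defined over $U$ and $\chi \in X_*(\bT)^\Gamma$ is $\Gamma$-invariant, completing the proof.
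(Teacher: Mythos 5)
Your part (i) is essentially the paper's own construction (Lemma 3.2 there): choose two large prime degrees, realize the prescribed closed fibres by monic polynomials via CRT/Nakayama, and kill the intersections with $Z$ fibrewise. Your counting remark is looser than what is literally true over small finite residue fields (where $Z_u$ may contain every rational point), but the fix is the one the paper uses and you implicitly allow: take $f_u$ irreducible of prime degree $m$ exceeding all degrees of points of $Z_u$ and $S'_u$, so that $V(f_u)$ is a single point of large degree and disjointness is automatic. Part (ii), however, is a genuinely different and correct route. The paper twists by $q_U^*(\cT_\infty)^{-1}$, base-changes to $S$, uses the tautological section $\Delta(S)\subset S\times_U S\subset \P^1_S$ together with Colliot-Th\'el\`ene--Sansuc homotopy invariance for isotrivial tori to trivialize $\cT'_S$ off that section, and then applies the corestriction $Tr_{S/U}$, so that $[S:U]\cdot\cT'|_{\P^1_U-(S\sqcup S')}=0=[S':U]\cdot\cT'|_{\P^1_U-(S\sqcup S')}$ and coprimality finishes. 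You instead compute the group of trivial-at-$\infty$ torsors outright as $X_*(\bT)^\Gamma$ via Hochschild--Serre for a splitting Galois cover (using $\Pic(\tilde U)=0$ and $\bT(\P^1_{\tilde U})=\bT(\tilde U)$, with $\chi\mapsto\chi_*(\cO(1))$ as an explicit section forcing the isomorphism), and then trivialize $\cO(1)$ on the complement by the section $F^aG^b$ with $a[S:U]+b[S':U]=1$. Your argument is longer but more informative: it classifies all such $\bT$-bundles and produces the trivialization explicitly, making the role of coprimality a transparent B\'ezout identity, whereas the paper's trace argument is shorter and needs no descent computation. One point you should make explicit so that your Step 3 really covers \emph{each} pair $S,S'$ as in (i), not only the ones you built: every closed subscheme $S\subset\mathbb A^1_U$ finite and \'etale over the semi-local $U$ is of the form $V(F)$ for a monic $F$ of degree $[S:U]$, because $\Gamma(S,\cO_S)$ is finite projective, hence free, over $\cO$, and the characteristic polynomial of $t$ generates the defining ideal.
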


We proceed to prove Theorem~\ref{T_bundle_main}. The item (i) of the theorem follows from
the following lemma.

\begin{lem}\label{lem:T_exist}
Let $Z\subset \P^1_U$ be a closed subscheme finite over $U$.
Then there exists a closed subscheme $S'$ in $\mathbb A^1_U$ finite and \'{e}tale over $U$ and such that
$Z\cap S'=\emptyset$. Furthermore, for each such $S'$
there exists a closed subscheme $S\subset \mathbb A^1_U$, finite and \'{e}tale over $U$ such that \\
(i) $S\cap Z=\emptyset=S\cap S'$; \\
(ii) the degrees $[S:U]$ and $[S':U]$ are co-prime integers.
\end{lem}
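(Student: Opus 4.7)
The plan is to translate the lemma into the explicit construction of two monic separable polynomials in $\cO[t]$, where $\cO=\Gamma(U,\cO_U)$ so that $\mathbb A^1_U=\spec \cO[t]$, and to glue local data at the closed points $u_1,\dots,u_n$ of $U$ (with residue fields $k_i=k(u_i)$) via the CRT surjection $\cO\twoheadrightarrow\prod_{i=1}^n k_i$. I would first observe that a closed subscheme of $\mathbb A^1_U$ that is finite and \'etale of degree $d$ over $U$ is exactly $V(f)$ for some $f\in\cO[t]$ monic of degree $d$ whose reduction in each $k_i[t]$ is separable: monicity gives finiteness, and \'etaleness is detected fiberwise by the discriminant being a unit in $\cO$. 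After replacing $Z$ by $Z\cap\mathbb A^1_U$ (still finite over $U$), the condition $V(f)\cap Z=\emptyset$ is equivalent to a fiberwise statement at each $u_i$, namely that the roots of $\bar f_i$ in $\bar{k_i}$ avoid the finite set $T_i$ of residue points of $Z$ over $u_i$; this reduction is valid because both $V(f)$ and $Z\cap\mathbb A^1_U$ are finite over the semilocal $U$.

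For the existence of $S'$, I would fix an integer $d'$ and, for each $i$, produce a monic separable $f_i\in k_i[t]$ of degree $d'$ whose roots in $\bar{k_i}$ miss $T_i$. If $k_i$ is infinite, picking $d'$ distinct elements of $k_i\setminus T_i$ suffices for any $d'\geq 1$. If $k_i$ is finite of cardinality $q_i$, I would pick a monic irreducible polynomial of degree $d'$: the number of such is $\frac{1}{d'}\sum_{e\mid d'}\mu(e)q_i^{d'/e}\sim q_i^{d'}/d'$, while those with a root in $T_i$ number at most $|T_i|/d'$, so a good $f_i$ exists once $d'$ is large enough uniformly in $i$. I would then lift $(f_i)_i$ coefficient-wise through the CRT surjection (keeping the leading coefficient equal to $1$) to a monic $f\in\cO[t]$ of degree $d'$ and set $S':=V(f)$.

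Given such an $S'$ of degree $d'=[S':U]$, I would construct $S$ by running the same procedure with the finite-over-$U$ closed subscheme $Z\sqcup S'$ in place of $Z$, and with an integer $d$ chosen both coprime to $d'$ and large enough for the polynomial construction to succeed over every $k_i$; infinitely many such $d$ exist. The resulting $S=V(g)\subset\mathbb A^1_U$ is finite \'etale over $U$ of degree $d$, disjoint from $Z$ and from $S'$, and $\gcd([S:U],[S':U])=\gcd(d,d')=1$ by construction.

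The main obstacle is the case of small finite residue fields, e.g.\ $k_i=\mathbb F_2$: here no low-degree monic separable polynomial can avoid a prescribed small set of roots, which forces the uniform choice of a sufficiently large common degree $d'$ across all $u_i$, and then a separately chosen large $d$ coprime to $d'$. Once the degrees are fixed large enough, the local existence of the polynomials via the irreducible-count argument and the global CRT lift are routine, and the disjointness and coprimality conditions are verified by the fiberwise criterion above.
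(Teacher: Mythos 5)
Your proposal is correct and follows essentially the same route as the paper: both arguments work fiberwise over the closed points of the semilocal base, take a single point of large degree (an irreducible polynomial / the unique degree-$q$ extension $k(u)(q)$) over the finite residue fields so that disjointness from $Z_u$ and $S'_u$ is forced by degree reasons, take distinct rational points over the infinite residue fields, lift the fiberwise data to a monic polynomial over $\cO$ via the CRT surjection, and arrange coprimality of the two degrees (the paper simply takes a prime $q\gg[S':U]$ where you take any sufficiently large $d$ coprime to $d'$). No gaps.
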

\begin{proof}[Proof of Lemma \ref{lem:T_exist}]
We prove the existence of $S$ assuming $Z$ and $S'$ are given. The same procedure applied with $S'=\emptyset$
(and no restrictions on the degree) can be used to produce the initial scheme $S'$.

Let ${\bf u}\subset U$ be the set of all closed points in $U$.
For a point $u\in {\bf u}$ let $k(u)$ be its residue field.
Let $\bu_{fin}\subset \bu$ consists of all $u\in \bu$ such that the field $k(u)$ is finite.
Put $\bu_{inf}=\bu-\bu_{fin}$.
Let $u\in \bu_{fin}$. Then the field
$k(u)$ is finite.
For a positive integer $r$ let $k(u)(r)$
be a unique field extension of the degree $r$ of the field $k(u)$.

Let $q\gg [S':U]$ be a prime number such that
for each $u\in \bu_{fin}$ the number $q$ is strictly greater than any of the degrees
$[k(u)(z): k(u)]$ and $[k(u)(s'): k(u)]$, where $z$ runs over all closed points of $Z_u$
and $s'$ runs over all closed points of $S'_u$.

There is a finite \'{e}tale $\bu$-scheme $\bs$ of degree $q$ over $\bu$
such that \\
(1) for $u\in \bu_{fin}$ one has $\bs_u=\spec k(u)(q)$;\\
(2) for $u\in \bu_{inf}$ one has $\bs_u=\sqcup^{q}_{1}u$.

Clearly, there is a finite \'{e}tale $U$-subscheme $S\subset \mathbb A^1_U$ of the form $\spec \cO[T]/(F(T))$
such that the polynomial $F(T)$ is monic of degree $q$ and such that \\
$(*_S)$ for each point $u\in \bu$ one has $S_u=\bs_u$.

Moreover, one can find
$F(T)$ of degree $q$ such that for each
$u\in \bu_{inf}$ one has
$S_u\cap S'_u=\emptyset=S_u\cap Z_u$.
For each $u\in \bu_{fin}$ one has
$S_u\cap S'_u=\emptyset=S_u\cap Z_u$ automatically.
Indeed, by the properties $(*_S)$ and (1) for each
$u\in \bu_{fin}$ the fibre $S_u$ equals $\spec k(u)(q)$.
The latter is a one point set in $\mathbb A^1_{k(u)}$ of the degree $q\gg 0$ choosen above.

The degrees $[S:U]=q$ and $[S':U]$ are co-prime integers by the choice of $q$.
We checked that the $S\subset \mathbb A^1_U$ constructed in this proof
does satisfy the properties (i) and (ii) of the Lemma.
\end{proof}

The proof of Theorem~\ref{T_bundle_main} also uses the following statement which seems to be folklore.
Nevertheless, we decided to include its proof to the sake of completeness.

\begin{prop}\label{Ker_P1 to Ker A1}
Let $\bT$ be a torus over $U$ and $S$ be a $U$-scheme.
Let $\cT$ be a $\bT$-bundle over $\P^1_S$ such that
$\cT|_{\infty\times S}$ is trivial.
Then
$\cT|_{\Aff^1_S}$ is trivial.
\end{prop}
\begin{proof}
To prove this Proposition
consider two presheaves $Ker^{\Pro}$ and $Ker^{\Aff}$ on $Sch/U$:\\
$$V\mapsto Ker[H^1_{et}(\Pro^1_V,\bT)\xra{i^*_0} H^1_{et}(V,\bT)]\ \ \ \text{and}\ \ \  V\mapsto Ker[H^1_{et}(\Aff^1_V,\bT)\xra{i^*_0} H^1_{et}(V,\bT)].$$
It suffices to check that the natural presheaf morphism
$Ker^{\Pro}$ and $Ker^{\Aff}$
{\it vanishes}. Let $V\in Sch/U$.
The Leray spectral sequence yields the following
exact sequence
\begin{equation}\label{eq:T3}
0\to H^1(V,p_*p^*\bT)\xrightarrow{p^*} H^1(\P^1_V,p^*\bT)\xrightarrow{\beta} H^0(V,R^1p_*(p^*\bT))
\end{equation}
showing that the morphism $\beta|_{Ker^{\Pro}(V)}: Ker^{\Pro}(V)\to H^0(V,R^1p_*(p^*\bT))$ is injective. Thus,
the presheaf $Ker^{\Pro}$ is a separated presheaf (for the \'{e}tale topology on $Sch/U$). By the same reasoning
the presheaf $Ker^{\Aff}$ is a separated presheaf (for the \'{e}tale topology on $Sch/U$).
Thus, to check the vanishing of the morphism
$res: Ker^{\Pro} \to Ker^{\Aff}$
it suffices to check it on the \'{e}tale stalks of the presheaves
$Ker^{\Pro}$ and $Ker^{\Aff}$. So, we may suppose that $V$ is local and strictly henselian. In this case
$T=\mathbb G^k_m$ ($k$ is the rank of $\bT$),
$Ker^{\Pro}=Pic(\Pro^1_V)^k$ and the map
$Ker^{\Pro}(V) \xra{res} Ker^{\Aff}(V)$ vanishes because the map
$Pic(\Pro^1_V)\xra{res} Pic(\Aff^1_V)$ vanishes.
\end{proof}

\begin{proof}[Proof of Theorem~\ref{T_bundle_main}]
The item (i) follows from Lemma~\ref{lem:T_exist}.

We proceed to prove (ii).
Let $\bT$ be a torus over $U$. Let $q_U:\P^1_U\to U$ denote the canonical projection.
Consider the restriction $\cT_\infty=\cT|_{\infty\times U}$ which is a
$\bT$-bundle over $U$. Then $\cT'=\cT\cdot q_U^*(\cT_\infty)^{-1}$ is a $\bT$-bundle over $\P^1_U$ such that
$\cT'|_{\infty\times U}$ is trivial.

Observe that the projection $q_S:\P^1_S\to S$ has a section
$$
S\to\Delta(S)\subset S\times_U S\subset\P^1_S.
$$
It follows that $\P^1_S-\Delta(S)$ is isomorphic to $\mathbb A^1_S$. Note that, since $S$ is finite and \'etale over $U$,
$S$ is a semi-local  affine scheme.
The the $\bT$-bundle
$\cT'_S|_{\infty\times S}$ is trivial. Thus, by Proposition
\ref{Ker_P1 to Ker A1}
the $\bT$-bundle $\cT'_S|_{\P^1_{S}-\Delta(S)}$ is trivial.
Consider the trace map on \'etale cohomology
$$
Tr_{S/U}:H^1((\P^1_U-(S\sqcup S'))_S,\bT_S)\to H^1(\P^1_U-(S\sqcup S'),\bT).
$$
Then $Tr_{S/U}\bigl(\cT'_S|_{(\P^1_U-(S\sqcup S'))_S}\bigr)=[S:U]\cdot [\cT']|_{\P^1_U-(S\sqcup S')}$ is trivial.

In the same way $Tr_{S'/U}\bigl([\cT'_{S'}]|_{(\P^1_U-(S\sqcup S'))_{S'}}\bigr)=
[S':U]\cdot [\cT']|_{\P^1_U-(S\sqcup S')}$ is also trivial. Since $[S:U]$ and $[S':U]$ are co-prime integers, it follows that
$[\cT']|_{\P^1_U-(S\sqcup S')}$ is trivial.

Since $[\cT]=[\cT']\cdot q_U^*([\cT_\infty])$, we conclude that
$[\cT]|_{\P^1_U-(S\sqcup S')}$ is extended from $U$.
The proof of Theorem~\ref{T_bundle_main} is completed.
\end{proof}

In the proof of Lemma \ref{H2_C} below we use the following extension of
Proposition~\ref{Ker_P1 to Ker A1}.

\begin{prop}\label{prop:formula}
Let $V$ be a  affine scheme and Let $p:\P^1_V\to V$ denote the canonical projection.
Let $\bT$ be a torus over $V$. Then there is a functorial in $V$ isomorphism
$$p^*+\rho^T_V: H^1(V,\bT)\oplus Hom_{V-gr}(\mathbb G_{m,V},\bT)\to H^1(\P^1_V,\bT),$$
$$\cT \mapsto p^*(\cT) \ \text{and} \ \phi \mapsto \phi_*(\mathcal{O}_{\P^1_V}(-1)).$$
of abelian groups. In particular, $res^{\Pro^1}_{\Aff^1}(H^1(\P^1_V,\bT))\subset \text{Im}[H^1(V,\bT)\to H^1(\Aff^1_V,\bT)]$.
\end{prop}

\begin{proof}
Let $p:\P^1_V\to V$ denote the canonical projection. The Leray spectral sequence of \'etale cohomology
$
H^i(V,R^jp_*p^*\bT)\implies H^{i+j}(\P^1_V,p^*\bT).
$
gives rise to a short exact sequence
\begin{equation}\label{eq:T5}
0\to H^1(V,p_*p^*\bT)\xrightarrow{p^*} H^1(\P^1_V,p^*\bT)\xrightarrow{\beta^T_V} H^0(V,R^1p^*\bT))
\end{equation}
Suppose for a moment that the map $\beta^T_V \circ \rho^T_V$ is an isomorphism.
Then the map $\beta^T_V$ is surjective. Thus, the map
$(p^*,\rho^T_V): H^1(\P^1_V,p^*\bT)\to H^1(V,p_*p^*\bT)\oplus H^0(V,R^1p^*\bT))$
is an isomorphism. The composite map
$$(i^*_0,\beta^T_V)\circ (p^*+\rho^T_V): H^1(V,\bT)\oplus Hom_{V-gr}(\mathbb G_{m,V},\bT)\to H^1(V,\bT)\oplus H^0(V,R^1p^*\bT))$$
is also an isomorphism, because $0=\beta^T_V \circ p^*$. The conclusion is: the map $(p^*+\rho^T_V)$
is an isomorphism.
So, it remains to show that the map $\beta^T_V \circ \rho^T_V$ is an isomorphism.

The asignment
$V\mapsto can^T_V=\beta_V \circ \rho_V$ is an \'{e}tale sheaf morphism
$can^T: \underline {Hom}(\mathbb G_{m},T)\to R^1p_*(T)$.
Thus, it's sufficient to prove that this \'{e}tale sheaf morphism
$$can^T: \underline {Hom}(\mathbb G_{m},T)\to R^1p_*(T)) \ \ [ \ can^T_X(\phi)=\phi_*(\cO(-1)) \ ]$$
is an isomorphism on stalks. So, it's suffices to take a local strictly
henselian $U$-scheme $V$ and to check that the $can^T_V$ is a group isomorphism. Note that for a given
$U$-scheme $X$ the homomorphism $can^T_X$ is functorial in the $X$-torus $T_X=T\times_U X$. Furthermore,
$can^T_X=can^{T'}_X \times can^{T''}_X$, if $T_X=T'\times_X T''$. To complete the proof it remains to note that
the homomorphism
$$can^{\mathbb G_m}_V: \mathbb Z=\text{Hom}(\mathbb G_{m,V},\mathbb G_{m,V})\to \text{H}^1(\Pro^r_V,\mathbb G_m)=\mathbb Z$$
is the identity map.
\end{proof}

\section{Reductive group scheme case}\label{sec:red}
The goal of this section is to prove
Theorem~\ref{th-two-pullbacks} and its corollary
Theorem~\ref{th:zar-cover-red}.

Let $\cO$ be a semi-local  ring.
Put $U:=\spec \cO$.
Let $\bG$ be a reductive $U$-group scheme.

\begin{thm}\label{th-two-pullbacks_partial}
Theorem \ref{th-two-pullbacks} is true if $\bG$ is a $U$-torus or a simply connected semi-simple $U$-group scheme.
\end{thm}

\begin{proof}[Proof of Theorems \ref{th-two-pullbacks} and \ref{th-two-pullbacks_partial}]
Proposition \ref{TwoThmsEquiv}
below shows that
Theorems
\ref{th-two-pullbacks}
and
\ref{th-two-pullbacks_partial}
are equivalent to each other.
Theorem \ref{th-two-pullbacks_partial} for simply connected semisimple groups is true by Corollary \ref{two_sections}
and Lemma \ref{4_properties} below.
Theorem \ref{th-two-pullbacks_partial} for tori is true by
Theorem \ref{T_bundle_main} and Lemma \ref{4_properties}.
Thus, Theorem \ref{th-two-pullbacks} is true as well.
\end{proof}

\begin{prop}
\label{TwoThmsEquiv}
Theorems
\ref{th-two-pullbacks}
and
\ref{th-two-pullbacks_partial}
are equivalent to each other.
\end{prop}

\begin{proof}
Clearly, Theorem \ref{th-two-pullbacks} yields Theorem \ref{th-two-pullbacks_partial}.
In order to derive Theorem \ref{th-two-pullbacks} from Theorem \ref{th-two-pullbacks_partial}, we need the
following obvious
\begin{lem}\label{4_properties}
Let $\bH$ be an affine $U$-group scheme. Then the following properties are equivalent: \\
(i) for any principal $\bH$-bundle $\cH$ over $\P^1_U$, any semi-local  $U$-scheme $V$, and any two $U$-morphisms
$\phi_1,\phi_2: V\rightrightarrows \mathbb P^1_U$, the two principal $\bH$-bundles
$\phi^*_1(\cH)$ and $\phi^*_2(\cH)$ over $V$ are isomorphic; \\
(ii) for any principal $\bH$-bundle $\cH$ over $\P^1_U$, any semi-local  $U$-scheme $V$, and
any two sections $s_1,s_2: V\rightrightarrows \mathbb P^1_V$ of the projection $\mathbb P^1_V\to V$,
the two $\bH$-bundles $s^*_1(\cH_V)$ and $s^*_2(\cH_V)$ over $V$ are isomorphic
(here $\cH_V=(id\times f)^*(\cH)$ and $f: V\to U$ is the structure morphism); \\
(iii) for any principal $\bH$-bundle $\cH$ over $\P^1_U$, any semi-local  $U$-scheme $V$, and
any two sections $s_1,s_2: V\rightrightarrows \mathbb P^1_V$ as
in the item (ii) such that $s_1(V)\cap s_2(V)=\emptyset$,
the two $\bH$-bundles $s^*_1(\cH_V)$ and $s^*_2(\cH_V)$ over $V$ are isomorphic;\\
(iv) for any principal $\bH$-bundle $\cH$ over $\P^1_U$,
for any semi-local  $U$-scheme $V\xra{f} U$, and the bundle $\cH_V$ as in (ii),
 the principal $\bH$-bundles
$s^*_0(\cH_V)$ and $s^*_{\infty}(\cH_V)$ over $V$ are isomorphic
(here $s_0$ and $s_{\infty}$ are the zero and the infinity sections respectively);\\
(v) for any principal $\bH$-bundle $\cH$ over $\P^1_U$,
the principal $\bH$-bundles
$s^*_0(\cH)$ and $s^*_{\infty}(\cH)$ over $U$ are isomorphic.
\end{lem}
\begin{notation}\label{n: P_P}
Let $0\neq n\in \mathbb N$ be an integer. Write
${\bf n}: \P^1_W \to \P^1_W$
for the morphism taking
$[t_0:t_1]$ to $[t^n_0:t^n_1]$.
Write
$\P^{1,dom({\bf n})}_W$ for the domain of the morphism ${\bf n}$.\\
Write
$s^{dom({\bf n})}_{\infty}$ and $s^{dom({\bf n})}_{0}$
for the infinity and the zero sections of the projection
$\P^{1,dom({\bf n})}_U \to U$.
Clearly,
${\bf n}\circ s^{dom({\bf n})}_{\infty}=s_{\infty}$,
${\bf n}\circ s^{dom({\bf n})}_{0}=s_{0}$.
\end{notation}

\begin{notation}\label{ext_seq}

Let $\bS$ be the radical $Rad(\bG)$ of $\bG$, let $\bG^{sc}$ be the simply connected cover of the derived group $\bG^{der}$.
Put $\tilde \bG= \bS\times \bG^{sc}$. Let $\Pi: \tilde \bG \to \bG$ be the canonical $U$-group scheme morphism.
It is a central isogeny, that is, if $\bC=Ker(\Pi)$, then $\bC$ is a finite $U$-group scheme of multiplicative type,
central in $\tilde\bG$, and
one has a short exact sequence of $fppf$-sheaves
$\{1\}\to \bC \xra{in} \tilde \bG\xra{\Pi} \bG\to \{1\}$.
\end{notation}

To complete the proof of Proposition \ref{TwoThmsEquiv}
we need also the following lemma
which will be proven below in this text.

\begin{lem}\label{mathcal G_sc}
There is
a positive integer $m$
(depending only of the group $\bG$)
such that
for each
principal $\bG$-bundle $\cG$ over $\P^1_U$ with $\cG|_{\infty\times U}$ trivial
there is
a $\tilde \bG$-bundle $\tilde \cG$ over $\P^{1,dom({\bf m})}_U$ such that \\
(1) ${\bf m}^*(\cG)=\Pi_*(\tilde \cG)$ over $\P^{1,dom({\bf m})}_U$, \\
(2) $(s^{dom({\bf m})}_{\infty})^*(\tilde \cG)$ is trivial.
\end{lem}
Given the last two Lemmas, we are able to complete the proof of Proposition~\ref{TwoThmsEquiv}.
Namely, we are given a reductive $U$-group scheme $\bG$ and a principal
$\bG$-bundle $\cG$ over $\P^1_U$.
We are also given a semi-local $U$-scheme $V$ and two $U$-morphisms
$\phi_1,\phi_2: V\rightrightarrows \mathbb P^1_U$.
We need to check that the $\bG$-bundles
$\phi^*_1(\cG)$ and $\phi^*_2(\cG)$
over $V$ are isomorphic. By Lemma \ref{4_properties}
it suffices to check that $s^*_0(\cG)$ and $s^*_{\infty}(\cG)$
are isomorphic as $\bG$-bundles over $U$,
where $s_0$ and $s_{\infty}$ are the zero and the infinity sections of the canonical projection $p:\P^1_U\to U$
respectively. Twisting $\bG$ by means of
the $\bG$-bundle $s^*_{\infty}(\cG_{new})$ and $\cG$ by means of $p^*s^*_{\infty}(\cG_{new})$,
we obtain a $U$-group $\bG_{new}$ and
a $\bG_{new}$-bundle $\cG_{new}$ such that the bundle $s^*_{\infty}(\cG_{new})$ is trivial.
The application of the inverse twist functor shows that it remains to check that the
$\bG_{new}$-bundle $s^*_0(\cG_{new})$ is trivial.

By Lemma \ref{mathcal G_sc} there is an integer $m>0$ and
a $\tilde \bG_{new}$-bundle $\tilde \cG_{new}$ over $\P^{1,dom({\bf m})}_U$ such that
(1) ${\bf m}^*(\cG_{new})=\Pi_*(\tilde \cG_{new})$ over $\P^{1,dom({\bf m})}_U$ and
(2) $(s^{dom({\bf m})}_{\infty})^*(\tilde \cG_{new})$ is trivial.

Clearly, $\tilde \bG_{new}\cong \bS\times \bG^{sc}_{new}$.
By Theorem \ref{th-two-pullbacks_partial}
the $\tilde \bG_{new}$-bundle
$(s^{dom({\bf m})}_{0})^*(\tilde \cG_{new})$
is trivial. Now by Lemma \ref{mathcal G_sc} the $\bG$-bundle
$(s^{dom({\bf m})}_{0})^*({\bf m}^*(\cG_{new}))$ is trivial.
Since ${\bf m}\circ s^{dom({\bf m})}_{0}=s_0$ the $\bG$-bundle
$s^*_0(\cG_{new})$ is trivial.

The derivation of Theorem \ref{th-two-pullbacks} from Theorem
\ref{th-two-pullbacks_partial} is complete. Proposition~\ref{TwoThmsEquiv} is proved.

\end{proof}

Next we prove Lemma \ref{mathcal G_sc}. To do this it suffices to prove
Lemma \ref{H2_C} below. To state that Lemma we need some more notation.

\begin{notation}\label{proj_infty}
Let $\cF$ be a presheaf of abelian groups on the category $Sch/U$ of $U$-schemes.
Write below in the text
$\cF(\P^1)$ for the presheaf ${\cH}om(\P^1,\cF)$ and $\cF(\P^1)_{\infty}$ for its direct summand
$Ker[s^*_{\infty}: {\cH}om(\P^1,\cF)\to \cF]$, where $s_{\infty}$ is the infinity section of the $\P^1$.

Clearly, the assignments $\cF\mapsto \cF(\P^1)$ and $\cF\mapsto \cF(\P^1)_{\infty}$ are functors on the category
$PreAb$ of presheves of abelian groups on $Sch/U$. The inclusion
$\cF(\P^1)_{\infty}\subseteq \cF(\P^1)$ is a functor transformation on the category $PreAb$
(and hence on the category of sheaves).
\end{notation}

\begin{lem}\label{H2_C}
Let $n$ be the order of $\bC$, that is the degree $[\bC: U]$. Put $m=n^2$. Then the pull back map
${\bf m}^*: H^2_{fppf}(\P^1_U,\bC)_{\infty}\to H^2_{fppf}(\P^{1,dom({\bf m})}_U,\bC)_{\infty}$
vanishes.
\end{lem}
\begin{proof}
Let $\tilde \bT$ a maximal $U$-torus in $\tilde \bG$. By~\cite[Exp.XXII, Cor.4.1.7]{SGA3}
one has $\bC\subset \tilde \bT$. Put $\bT=\tilde \bT/\bC$. Then $\bT$ is a maximal torus of $\bG$
and $\{1\}\to \bC \xra{in} \tilde \bT\xra{\Pi} \bT\to \{1\}$ is a short exact sequence of $fppf$-sheaves.
It induces the following short exact sequence of cohomology groups:
$$
H^1_{fppf}(\P^1_U,\bT)_\infty\to H^2_{fppf}(\P^1_U,\bC)_\infty\to H^2_{fppf}(\P^1_U,\tilde\bT)_\infty.
$$
Note that, since $\bT$ and $\tilde\bT$ are smooth, the \'etale cohomology with values in these sheaves coincides with fppf
cohomology.

Recall that $p:\P^1_U\to U$ is the canonical projection.
The spectral sequence of the form
$H^i(U,R^jp_*(\tilde \bT)_{\infty})\Rightarrow H^{i+j}(\P^1_U,\tilde \bT)_{\infty}$
shows that
the map
$$Ker[H^2(\P^1_U,\tilde \bT)_{\infty}\to H^0(U,R^2p_*(\tilde \bT)_{\infty}]\to H^1(U,R^1p_*(\tilde \bT)_{\infty})
$$
is mono.

The sheaf $R^2p_*(\tilde\bT)_\infty$ has no torsion~\cite[pp. 193--194]{Gabber-thesis}.
Thus, there is a canonical map
$H^2(\P^1_U,\tilde \bT)_{\infty,tors}\to H^1(U,R^1p_*(\tilde \bT)_{\infty})$
and it is mono. By Proposition~\ref{prop:formula} the sheaf
$R^1p_*(\tilde \bT)_{\infty}$ is the cocharacter sheaf $\tilde {\underline{\bT}}^{\vee}$,
and one readily sees that the sheaf morphism
${\bf n}^*: \tilde {\underline{\bT}}^{\vee}\to \tilde {\underline{\bT}}^{\vee}$
is the multiplication by $n$. Thus, the map
${\bf n}^*: H^1(U,R^1p_*(\tilde \bT)_{\infty})\to H^1(U,R^1p_*(\tilde \bT)_{\infty})$
is the multiplication by $n$. Hence
$$Im[H^2(\P^1_U,\bC)_{\infty}\xra{{\bf n_{\bT}}^* \circ \ in_*} H^2(\P^{1,dom({\bf n})}_U,\tilde \bT)_{\infty}]\subseteq$$
$$\subseteq Ker[H^2(\P^{1,dom({\bf n})}_U,\bC)_{\infty}\xra{in_*} H^2(\P^{1,dom({\bf n})}_U,\tilde \bT)_{\infty}]=
$$
$$
=Im[H^1(\P^{1,dom({\bf n})}_U,\bT)_{\infty}\xra{\delta} H^2(\P^{1,dom({\bf n})}_U,\bC)_{\infty}].$$

Also by Proposition~\ref{prop:formula} we have that
$H^1(\P^{1,dom({\bf n})}_U,\bT)_{\infty}=\underline{\bT}^{\vee}(U)$.
This yields that the map
${\bf n}^*_{\bT}: H^1(\P^{1,dom({\bf n})}_U,\bT)_{\infty}\to H^1(\P^{1,dom({\bf n})}_U,\bT)_{\infty}$
is the multiplication by $n$.
So, for each $a\in H^2(\P^1_U,\bC)_{\infty}$ there is $\xi\in \underline{\bT}^{\vee}(U)$ with $\delta(\xi)={\bf n}^*_{\bC}(a)$.
Therefore one has
${\bf m}^*_{\bC}(a)={\bf n}^*_{\bC}({\bf n}^*_{\bC}(a))={\bf n}^*_{\bC}(\delta(\xi))=\delta({\bf n}^*_{\bT}(\xi))=\delta(n\xi)=n\delta(\xi)=0$
(the very last equality uses the fact that $H^2(\P^{1,dom({\bf n})}_U,\bC)_{\infty}$ is the exponent $n$ group).
\end{proof}

Theorem \ref{th-two-pullbacks} has the following consequences.

\begin{cor}\label{cor:Z_1_and_Z_2}
Let $U$, $\bG$ and $\cG$ be as in Theorem~\ref{th-two-pullbacks}.
Let $Z\subset \P^1_U$ be a closed subset finite over $U$.
Then there is a Zariski neighborhood $V$ of $Z$ in $\P^1_U$
such that the $\bG$-bundle $\cG|_{V}$ is extended from $U$, and all fibres of $V$ over the closed points of $U$ are
non-empty. If $E=\cG|_{\infty\times U}$, then $\cG|_{V}$ is the extension of the $\bG$-bundle $E$ via
the canonical morphism $V\to U$.
\end{cor}

\begin{cor}
\label{cor:V_1_and_V_2}
Let $U$, $\bG$ and $\cG$ be as in Theorem~\ref{th-two-pullbacks}.
Let $Z$ and $V$ be as in Corollary \ref{cor:Z_1_and_Z_2}.
Put $Z'=\P^1_U-V$. Then $Z'$ is finite over $U$.
Let $V'$ be the Zariski neighborhood of $Z'$ in $\P^1_U$ that exists by Corollary \ref{cor:Z_1_and_Z_2}. Then
$V\cup V'=\P^1_U$ and the $\bG$-bundles $\cG|_{V}$ and $\cG|_{V'}$ are extended from $U$.
Moreover, if $E=\cG|_{\infty\times U}$, then these two bundles are both extensions of the $\bG$-bundle $E$.
\end{cor}

\begin{proof}[Proof of Corollary \ref{cor:Z_1_and_Z_2}]
If $Z$ is empty, then there is nothing to prove. So, let $Z$ be non-empty.
If one of the fibres of $Z$ over closed points of $U$ is empty,
then adjoin to $Z$ a section of the projection $p:\P^1_U\to U$.
Since $Z$ is finite over $U$ it is semi-local.
Let $z_1,...,z_s$ be the set of all closed points of $Z$.
Put ${\cal V}=Spec ({\cal O}_{\P^1_U;z_1,...,z_s})$.
Then $\cal V$ is a semi-local  $U$-scheme.
Consider two $U$-morhisms
$\phi_1: {\cal V}\xra{in} \P^1_U$
and
$\phi_2: {\cal V}\xra{s_{\infty}\circ (p|_{\cal V})} \P^1_U$.
Put $E=\cG|_{\infty\times U}$.
By Theorem \ref{th-two-pullbacks} the two $\bG$-bundles
$\phi^*_1(\cG)$ and $\phi^*_2(\cG)$ over $\cal V$ are isomorphic.
Thus, $\cG|_{V}$ is the extension of the $\bG$-bundle $E$.
Hence there is a Zariski neighborhood $V$ of $Z$ in $\P^1_U$
such that
the $\bG$-bundles $\cG|_{V}$ is the extension of the $\bG$-bundle $E$.

\end{proof}

\begin{proof}[Proof of Corollary \ref{cor:V_1_and_V_2}]
This Corollary is an easy consequence of Corollary \ref{cor:Z_1_and_Z_2}.
\end{proof}

\begin{proof}[Proof of Theorem~\ref{th:zar-cover-red}]
The statement of this theorem is just a reformulation of Corollary~\ref{cor:V_1_and_V_2}.
\end{proof}

\begin{proof}[Proof of Theorem~\ref{th:X}]
Clearly, we may suppose that $X$ is local.
In this case Theorem \ref{th:zar-cover-red} yields the result.
\end{proof}

\section{Appendix}
Let $U$ be as in Proposition \ref{SchemeY22}.
Put $\cO=\Gamma(U,\cO_U)$.
The main aim of this Section is to prove
Proposition \ref{SchemeY222}. By Remark
\ref{222yields22} the latter Proposition yields
the items (ii) and (iii) of Proposition \ref{SchemeY22}.
The results of this section are heavily based on Bertini type theorems
proved in \cite{Poo1} and \cite{Poo2}.

Let ${\bf u}\subset U$ be the set of all closed points in $U$.
For a point $u\in {\bf u}$ let $k(u)$ be its residue field.
Let $\bu_{fin}\subset \bu$ consists of all $u\in \bu$ such that the field $k(u)$ is finite.

\begin{defn}\label{bw_Y'}
Let $\bw\subset \bu$ be a subset and let $Y'$ be a $U$-scheme.
One says that the pair $(\bw, Y')$ is {\bf nice} if $Y'$ is finite, \'etale over $U$, $\bu_{fin}\subset \bw$
and for each $u\in \bw$ the $u$-scheme $Y'_u$ contains a $k(u)$-rational point.
Put $\bw_{inf}=\bw-\bu_{fin}$ and $\bu_{inf}=\bu-\bu_{fin}$.
\end{defn}

\begin{rem}\label{222yields22}
Let $\bG$, $\bu'\subset \bu$ and $Y'$ be as in Proposition \ref{SchemeY22}.
Suppose $Y'$ satisfies the property (i) of that Proposition.
Then the pair $(\bu',Y')$ is nice.
Indeed, for each $u\in \bu_{fin}$ the $k(u)$-group $\bG_u$ is quasi-split. In particular, it is isotropic.
Thus, $\bu_{fin}\subset \bu'$. Finally, for each $u\in \bu'$ the $u$-scheme $Y'_u$ contains a $k(u)$-rational point
by our assumption on $Y'$.

So, Proposition \ref{SchemeY222} yields
the items (ii) and (iii) of Proposition \ref{SchemeY22}.
\end{rem}

\begin{prop}
\label{SchemeY222}
Let $U$ be as in Proposition \ref{SchemeY22}.
Let $\bw\subset \bu$ be a subset and $Y'$ be a $U$-scheme.
Suppose the pair $(\bw, Y')$ is nice. Then one has\\
(i) there are diagrams of the form
$Y'\xleftarrow{\pi} Y\xrightarrow{j} \mathbb A^1_U$
with a finite \'{e}tale morphism $\pi$ and a closed $U$-embedding $j$
such that   \\
(a) for each $u\in \bw$ one has $1=\text{g.c.d.}_{v\in Y_u}[k(v):k(u)]$ and  \\
(b) for each $u\in \bw_{inf}$ the scheme $Y_u$ has a $k(u)$-rational point; \\
(ii) let $Z\subset \P^1_U$ be a closed $U$-finite subscheme. Then there are diagrams $Y'\xleftarrow{\pi} Y\xrightarrow{j} \mathbb A^1_U$
as in the assertion (i) of this proposition
and such that
$j(Y)\cap Z=\emptyset$.
\end{prop}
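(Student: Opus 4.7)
The plan is to recast the problem algebraically: write $Y' = \spec B$ where $B$ is a finite \'etale $\cO$-algebra, and look for a finite \'etale $B$-algebra $C$ such that $C$ is \emph{monogenic} as an $\cO$-algebra, i.e.\ $C = \cO[t]$ for some $t$. Then $Y := \spec C$ comes equipped with the finite \'etale projection $\pi: Y \to Y'$, and the generator $t$ gives the closed embedding $j: Y \hookrightarrow \mathbb A^1_U$ via $T \mapsto t$. The rational-point and gcd conditions (a), (b) translate into conditions on the closed-point fibers $C_u = C \otimes_\cO k(u)$.

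The construction of $C$ proceeds by controlling each fiber $C_u$. For $u \in \bu_{inf}$, a finite product of finite separable extensions of the infinite field $k(u)$ is automatically monogenic by the primitive element theorem; moreover a $k(u)$-rational summand of $B_u$ (present when $u \in \bw$) can be preserved as a $k(u)$-rational summand of $C_u$ by choosing the \'etale cover trivially over that summand, which gives the required rational point for $u \in \bw_{inf}$. For $u \in \bu_{fin} \subseteq \bw$, monogenicity of an \'etale $k(u)$-algebra is equivalent to its field summands having pairwise distinct degrees (since finite fields have a unique extension of each degree), so I replace every non-rational summand of $B_u$ by a finite \'etale extension of a large distinct degree while leaving the $k(u)$-rational summand corresponding to $y_u$ untouched; this simultaneously yields monogenicity and preserves the condition $\gcd_{v \in Y_u}[k(v):k(u)] = 1$. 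All these local prescriptions can be globalised into a single finite \'etale $B$-algebra $C$ via a Chinese-Remainder construction over the finite set $\bu_{fin}$, using the existence of monic polynomials over $\cO$ with prescribed factorization modulo each $u \in \bu_{fin}$, combined with Lemmas \ref{Lemma7_2} and \ref{OjPan} applied to suitable smooth $U$-schemes parametrising the \'etale covers and their generators.

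Once each fibre $C_u$ is monogenic over $k(u)$, a semilocal primitive-element lemma (lifting of fiberwise generators via Nakayama over the semi-local base $\cO$) produces a global generator $t \in C$ with $C = \cO[t]$, yielding $j$. For item (ii), the closed subscheme $Z \subset \P^1_U$ is finite over $U$, so each $Z_u \subset \P^1_{k(u)}$ is a finite set of closed points; I disjoin $j(Y)$ from $Z$ by composing $j$ with an automorphism of $\mathbb A^1_U$ (typically a translation $T \mapsto T + c$ for an appropriate $c \in \cO$). Over $u \in \bu_{inf}$ the infinitude of $k(u)$ and Chinese Remainder guarantee the existence of such $c$; over $u \in \bu_{fin}$ I refine the construction by enlarging the distinct degrees chosen in the previous step so that the residue fields of $Y_u$ strictly exceed those of points of $Z_u$, forcing disjointness automatically. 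The principal obstacle is the fiber-analysis step: simultaneously enforcing monogenicity via distinct degrees at $\bu_{fin}$, preserving rational points on the component carrying $y_u$, and realising all these prescriptions as a single global finite \'etale $B$-algebra $C$; the lemmas \ref{Lemma7_2} and \ref{OjPan} are the geometric tools tailored to this bookkeeping.
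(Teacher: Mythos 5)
There is a genuine gap at the heart of your fiber analysis for $u\in\bu_{fin}$. You prescribe a cover $C/B$ that has degree $1$ over the $k(u)$-rational summand of $B_u$ and a large degree over the remaining summands. But $\pi:Y\to Y'$ is required to be finite \'etale, and a finite \'etale morphism has locally constant degree; the rational and non-rational points of $Y'_u$ will in general lie on the \emph{same} connected component of $Y'$ (e.g.\ $Y'=\spec\cO[x]/(g)$ irreducible with $g\bmod\mathfrak m_u=(\text{linear})\cdot(\text{irreducible quadratic})$), so no finite \'etale $B$-algebra $C$ realizes your prescription. A second, independent failure: retaining a $k(u)$-rational point of $Y_u$ at a finite place is incompatible with item (ii) in general, since $Z_u$ may exhaust the $k(u)$-rational points of $\mathbb A^1_{k(u)}$ (already for $k(u)=\mathbb F_2$), making $j(Y_u)\cap Z_u=\emptyset$ impossible. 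Note also that your stated criterion ``monogenic over a finite field $\Leftrightarrow$ pairwise distinct degrees'' is only the sufficiency direction; the correct quantitative condition is that the number of summands of each degree $d$ not exceed the number of degree-$d$ closed points of $\mathbb A^1_{k(u)}$, which is what the paper's counting lemma actually verifies.

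The paper's construction sidesteps both problems by making the cover \emph{constant over $U$}: it chooses two distinct primes $q_1,q_2\gg 0$ and finite \'etale $U$-schemes $U^{(r)}=\spec\cO[T]/(F_r)$ of degree $q_r$ whose fiber over each finite place is the field $k(u)(q_r)$ and over each infinite place is $q_r$ copies of $u$, and sets $Y=Y'\times_U U^{(1)}\sqcup Y'\times_U U^{(2)}$ with $\pi$ the projection. Then $\gcd$-triviality at a finite place comes not from a retained rational point but from the coprimality of $q_1$ and $q_2$ applied to the fiber over the rational point of $Y'_u$ guaranteed by niceness; all residue degrees in $Y_u$ are of the form $dq_r$ with $q_r$ exceeding the degrees occurring in $Z_u$, so $j(Y_u)\cap Z_u=\emptyset$ is automatic at finite places; and monogenicity of $Y_\bu$ follows from the estimate $Irr_{k(u)}(dq_r)\geq\sharp Y'_u(d)$, after which Nakayama lifts the fiberwise generator to a global closed embedding $j$, exactly as in the last step of your proposal. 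If you replace your fiberwise prescription by this pullback construction, the rest of your outline (Nakayama lifting, translation to avoid $Z$ at infinite places) goes through.
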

This Proposition is a direct consequence of Lemma \ref{F1F2} stated and proven below in this Section.
In turn, Lemma \ref{F1F2} is a slight extension of \cite[Lemma 2.1]{Pan3}.

\begin{notation}\label{notn: F1F2}
Let $k$ be the finite field.
For a positive integer $r$ let $k(r)$
be a unique field extension of the degree $r$ of the field $k$.
Let $\mathbb A^1_{k}(r)$ be the set of all degree $r$ points
on the affine line $\mathbb A^1_{k}$.
Let $Irr_k(r)$ be the number of the degree $r$ points on
$\mathbb A^1_{k}$.
\end{notation}

\begin{lem}
\label{F1F2}
Let $U$, $\bw\subset \bu$ and $Y'$ be as in Proposition \ref{SchemeY222}. Suppose the pair $(\bw,Y')$ is nice.
Let $Z\subset\mathbb A^1_U$ be a closed subscheme finite over $U$.
Then
there are finite \'{e}tale $U$-schemes $U^{(r)}$ of the form $\spec \cO[T]/(F_r(T))$ ($r=1,2$)
with monic polynomials $F_r(T)$ of prime degrees $q_r$ ($r=1,2$) such that \\
(i) the degrees $q_1$ and $q_2$ are coprime and for each $u\in \bu_{fin}$ one has $U^{(r)}_u=\spec k(u)(q_r)$; \\
(ii) for any $u\in \bu_{fin}$ and $v\in Y'_u$ one has $k(v)\otimes_{k(u)} k(u)(q_r)$ is a field for $r=1,2$; \\
(iii) for each $u\in \bu_{fin}$ the degrees $q_1$ and $q_2$ are strictly greater than any of the degrees
$[k(u)(z): k(u)]$, where $z$ runs over all closed points of $Z_u$; \\
(iv) for each $u\in \bu_{inf}$ and for $r=1,2$ one has $U^{(r)}_u=\sqcup^{q_r}_{1}u$; \\
(v) there is a closed embedding of $U$-schemes
$$Y=Y^{\prime}\times_U U^{(1)} \coprod Y^{\prime}\times_U U^{(2)} \xrightarrow{j} \mathbb A^1_U,$$\\
such that one has $j(Y) \cap Z = \emptyset$; \\
(vi) for each $u\in \bu_{fin}$ one has
$1=\text{g.c.d.}_{v\in Y_u}[k(v):k(u)]$; \\
(vii) for each $u\in \bw_{inf}$ the $k(u)$-scheme $Y_u$ contains a $k(u)$-rational point;\\
(viii) $Pic(\P^1_{u}-j(Y)_{u})=0$ for any closed $U$-embedding $j: Y\hra \mathbb A^1_U$ and any $u\in \bw$.
\end{lem}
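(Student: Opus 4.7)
The plan proceeds in three stages: choose primes, construct the polynomials $F_r$, and build the embedding $j$. First, pick distinct primes $q_1 \ne q_2$ both exceeding $\max\{[k(v):k(u)] : u \in \bu_{fin}, \, v \in Y'_u \text{ closed}\}$ and $\max\{[k(u)(z):k(u)] : u \in \bu_{fin}, \, z \in Z_u \text{ closed}\}$. This yields (iii) at once, and since $q_r$ is prime and coprime to every $[k(v):k(u)]$ appearing, the degree formula for composita of finite fields gives (ii). Then, by weak approximation in the semi-local ring $\cO$, build monic $F_r(T) \in \cO[T]$ of degree $q_r$ whose reduction modulo each maximal ideal $\mathfrak{m}_u$ is an irreducible polynomial of degree $q_r$ for $u \in \bu_{fin}$ (so that $U^{(r)}_u = \spec k(u)(q_r)$), and a product of $q_r$ pairwise distinct linear factors over $k(u)$ for $u \in \bu_{inf}$ (so that $U^{(r)}_u \simeq \sqcup^{q_r}_1 u$); the roots at infinite-residue fibers are chosen to avoid $Z_u$ and to satisfy genericity needed below. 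This gives (i) and (iv), and makes $U^{(r)}$ finite \'etale over $U$.

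The main step is constructing the closed embedding $j$. The aim is to present $Y = Y' \times_U U^{(1)} \sqcup Y' \times_U U^{(2)}$ as $\spec \cO[X]/H(X)$ for a monic separable $H$, equivalently to exhibit a primitive element of the finite \'etale $\cO$-algebra $A := \Gamma(Y, \cO_Y)$; the disjoint union decomposition is then automatic from the idempotent structure of $A$, so the image of $\spec A \hookrightarrow \mathbb{A}^1_U$ is truly the disjoint union of the two pieces. By Nakayama's lemma, a surjection $\cO[X] \twoheadrightarrow A$ can be assembled from elements $x \in A$ whose reductions $x_u := x \bmod \mathfrak{m}_u A$ are primitive elements of each fiber $A_u$. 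For $u \in \bu_{inf}$ this is classical; for $u \in \bu_{fin}$, the fiber $A_u$ is a direct product of finite separable extensions of $k(u)$ of degrees $[k(v):k(u)] \cdot q_r$, and a primitive element exists because such a product is isomorphic to $k(u)[X]/(\prod_j H_j)$ for pairwise coprime monic irreducibles $H_j \in k(u)[X]$, of which there are abundantly many once $q_r$ is large (the count of monic irreducibles of degree $d$ over $k(u)$ grows like $|k(u)|^d/d$). Fiberwise primitive elements lift to a global $x \in A$ by weak approximation. The condition $j(Y) \cap Z = \emptyset$ is then automatic at $\bu_{fin}$ --- closed points of $j(Y)_u$ have degree $\geq q_r$, strictly larger than any in $Z_u$ by (iii), hence are distinct --- and is achievable at $\bu_{inf}$ by the freedom built into the previous stage and by adjusting $x$ by a constant. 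This yields (v).

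The remaining items follow routinely. For (vi), niceness of $(\bw, Y')$ supplies a $k(u)$-rational point $v_0 \in Y'_u$ for each $u \in \bu_{fin} \subset \bw$; the corresponding closed points of $Y_u$ above $v_0$ have degrees $q_1$ and $q_2$, which are coprime. For (vii), at $u \in \bw_{inf}$, the complete splitting of $U^{(r)}_u$ combined with the $k(u)$-rational point of $Y'_u$ produces $k(u)$-rational points of $Y_u$. For (viii), the standard description $\Pic(\P^1_{k(u)} - j(Y)_u) \simeq \Z/(\gcd_{y \in j(Y)_u}[k(y):k(u)])\Z$ combined with (vi) and (vii) gives triviality of the Picard group for all $u \in \bw$. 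The principal obstacle is the main step: assembling a global primitive element for $A$ that reduces to a fiberwise primitive element at every closed point requires a careful approximation argument, particularly at finite-residue fibers where the pool of irreducibles is tight --- this is exactly where the large-prime choice of $q_r$ does its work, and is the crux of the argument, following the strategy of \cite[Lemma 2.1]{Pan3}.
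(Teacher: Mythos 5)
Your proposal follows essentially the same route as the paper: two large distinct primes $q_1,q_2$, fibrewise construction of $U^{(r)}$ (irreducible of degree $q_r$ over finite residue fields, totally split over infinite ones), a fibrewise primitive element for $\Gamma(Y,\cO_Y)$ obtained from counting monic irreducibles of degree $dq_r$ and lifted globally by Nakayama, with $j(Y)\cap Z=\emptyset$ forced by degree comparison at finite residue fields and by genericity at infinite ones; items (vi)--(viii) are derived exactly as in the paper. The argument is correct and matches the paper's proof via Lemmas \ref{l:F1F2_preliminary}, \ref{l:F1F2_surjectivity} and \ref{F1F2_copy_very_final}.
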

This lemma is proved at the end of this Section.
Its proof requires some preparation.

\begin{lem}\label{l:F1F2_preliminary}
Let $k$ be a finite field. Let $c=\sharp(k)$ (the cardinality of $k$).
Let $d\geq 1$ be an integer and
$k(d)/k$ be a unique finite field extension of degree $d$.
Let $q\in \mathbb N$
be a prime which is co-prime as to the characteristic of the field $k$,
so to the integer $d$.
Then \\
(1) $Irr_k(q)=(c^q-c)/q$;\\
(2) the $k$-algebra $k(d)\otimes_{k} k(q)$ is the field $k(d)(q)=k(dq)$;\\
(3) $Irr_k(dq) \geq Irr_k(q)$;\\
(4) $Irr_k(q)=(c^q-c)/q \gg 0$ for $q\gg 0$.
\end{lem}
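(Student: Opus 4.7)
The plan is to derive all four items from the standard Gauss-type identity
$$c^n = \sum_{e \mid n} e \cdot Irr_k(e),$$
obtained by partitioning the $c^n$ elements of $k(n)$ according to the degree of their minimal polynomial over $k$, each degree-$e$ Frobenius orbit corresponding to a unique monic irreducible of degree $e$ in $k[T]$.

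For (1), I would observe that since $q$ is prime, every element of $k(q)$ has minimal polynomial over $k$ of degree $1$ or $q$; the degree-$1$ ones are precisely the $c$ elements of $k$, so the remaining $c^q - c$ elements fall into Frobenius orbits of size exactly $q$, yielding $Irr_k(q) = (c^q - c)/q$. Item (4) is then immediate from this explicit formula, since for $c \geq 2$ the quantity $(c^q-c)/q$ grows like $c^q/q$ and tends to infinity with $q$.

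For (2), the canonical embeddings $k(d), k(q) \hookrightarrow k(dq)$ induce a $k$-algebra homomorphism $\phi\colon k(d) \otimes_k k(q) \to k(dq)$, whose image is the compositum $k(d)\cdot k(q)$ inside $k(dq)$. Because $\gcd(d,q)=1$, this compositum has $k$-degree $\mathrm{lcm}(d,q)=dq$, so $\phi$ is surjective; comparing dimensions, which equal $dq$ on both sides, upgrades surjectivity to an isomorphism.

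For (3), which I view as the only item requiring genuine estimation, I would apply the Gauss identity to $n = dq$, isolate the top-degree term, and bound the contribution of the proper divisors $e < dq$ using the trivial inequality $e \cdot Irr_k(e) \leq c^e$ (itself a consequence of the same Gauss identity applied at $n=e$). This yields a lower bound of the form $dq \cdot Irr_k(dq) \geq c^{dq} - \tau(dq)\cdot c^{dq/p}$, where $p$ is the smallest prime factor of $dq$. Comparing with the exact value $q\cdot Irr_k(q)=c^q-c$ from (1) reduces the claim to a direct elementary inequality: the case $d=1$ is trivial, and for $d\geq 2$ one has $dq \geq 2q$, so the dominant term $c^{dq}$ comfortably exceeds both $d\cdot c^q$ and $\tau(dq)\cdot c^{dq/p}$. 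This numerical comparison of exponents, rather than anything structural, is the main technical point of the lemma; all other items reduce to clean one-line arguments.
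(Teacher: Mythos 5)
Your proposal is correct, and it follows the same basic counting strategy as the paper: items (1), (2) and (4) are handled exactly as the paper does (the paper simply declares (1) and (2) ``clear'' and deduces (4) from the explicit formula in (1)). The only real divergence is in item (3). The paper writes down a closed-form expression $Irr_k(dq)=(c^{dq}-c^d-c^q+c)/dq$ and reduces (3) to the elementary inequality $c^d(c^{d(q-1)}-1)\geq (d+1)(c^q-c)$; note, however, that this closed form is the M\"obius-inversion formula for a product of two distinct primes and is exact only when $d$ itself is prime (for example $d=4$, $q=3$ gives $Irr_k(12)=(c^{12}-c^6-c^4+c^2)/12$, not $(c^{12}-c^4-c^3+c)/12$). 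Your version, which isolates the top term of the Gauss identity $c^n=\sum_{e\mid n}e\cdot Irr_k(e)$ and bounds the contribution of proper divisors by $e\cdot Irr_k(e)\leq c^e$, avoids this issue entirely and is valid for arbitrary $d$ coprime to $q$; the resulting numerical comparison $c^{dq}\geq \tau(dq)\,c^{dq/p}+d\,c^q$ does check out for all admissible $d\geq 2$, $q$ (where necessarily $dq\geq 6$). So your route is marginally longer but strictly more robust than the one in the paper.
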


\begin{proof}
The assertions (1) and (2) are clear. The assertion (3) is clear for $d=1$.
To prove the assertion (3) for $d\geq 2$ note that
$Irr_k(dq)=(c^{dq}-c^d-c^q+c)/dq$. By (1) we know that $Irr_k(q)=(c^q-c)/q$.
It remains to check that
$c^{dq}-c^d \geq (d+1)(c^q-c)$.
Which in turn is
equivalent
to the inequality
$c^d(c^{d(q-1)}-1)\geq (d+1)(c^q-c)$. The latter is true.
So, the assertion (3) is proved.
It yields the assertion (4).
These proves the lemma.
\end{proof}

\begin{notation}
\label{d(Y_u)}
For any \'{e}tale $k(u)$-scheme $W$
set $d(W)=\text{max} \{ deq_{k(u)} k(u)(v)| v\in W \}$.
\end{notation}

\begin{lem}\label{l:F1F2_surjectivity}
Let $u\in \bu_{fin}$ (so, the residue field $k(u)$ is finite).
Let $Y'_u$ be an \'{e}tale $k(u)$-scheme. For any positive integer $d$ let
$Y'_u(d)\subseteq Y'_u$ be the subset consisting of points $v\in Y'_u$ such that
$deg_{k(u)}(k(v))=d$.
For any prime $q\gg 0$ the following holds: \\
(1) if $v\in Y'_u$, then $k(v)\otimes_{k(u)} k(u)(q)$ is the field $k(v)(q)$ of the degree $q$ over $k(v)$; \\
(2) $k(u)[Y'_u]\otimes_{k(u)} k(u)(q)=
\prod_{v\in Y'_u} k(v)(q)=\prod^{d(Y'_u)}_{d=1}\prod_{v\in Y'_u(d)} k(u)(dq)$;\\
(3) there is a surjective
$k(u)$-algebra homomorphism
\begin{equation}\label{eq:first_surjectity}
\alpha: k(u)[t]\to k(u)[Y'_u]\otimes_{k(u)} k(u)(q)=\prod^{d(Y'_u)}_{d=1}\prod_{v\in Y'_u(d)} k(u)(dq).
\end{equation}
\end{lem}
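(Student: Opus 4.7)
The plan is to deduce all three items from the preceding Lemma \ref{l:F1F2_preliminary} by taking $q$ prime large enough and then applying the Chinese Remainder Theorem.

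For item (1), fix $v\in Y'_u$ and let $d=[k(v):k(u)]$. Since $k(u)$ is finite, $k(v)$ is the unique extension $k(u)(d)$. Choose $q$ prime with $q\neq\mathrm{char}\,k(u)$ and $q>d(Y'_u)$; then $q$ is automatically coprime to every $d\leq d(Y'_u)$. By Lemma \ref{l:F1F2_preliminary}(2) applied to the field $k(u)$ and the degree $d$, the tensor product $k(v)\otimes_{k(u)}k(u)(q)=k(u)(d)\otimes_{k(u)}k(u)(q)$ is the field $k(u)(dq)=k(v)(q)$ of degree $q$ over $k(v)$.

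For item (2), since $Y'_u$ is \'etale over $k(u)$, one has $k(u)[Y'_u]=\prod_{v\in Y'_u}k(v)$ as $k(u)$-algebras. Tensoring with $k(u)(q)$ is exact and commutes with finite products, so
\[
k(u)[Y'_u]\otimes_{k(u)}k(u)(q)=\prod_{v\in Y'_u}\bigl(k(v)\otimes_{k(u)}k(u)(q)\bigr)=\prod_{v\in Y'_u}k(v)(q),
\]
and by (1) the factor attached to $v\in Y'_u(d)$ equals $k(u)(dq)$. Grouping by $d$ gives the decomposition stated in item (2).

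For item (3), the target ring $R=\prod_{d=1}^{d(Y'_u)}\prod_{v\in Y'_u(d)}k(u)(dq)$ is a finite product of finite separable extensions of $k(u)$. By the Chinese Remainder Theorem, to produce a surjection $\alpha\colon k(u)[t]\twoheadrightarrow R$ it suffices to find, for each pair $(d,v)$ with $v\in Y'_u(d)$, an irreducible polynomial $p_{d,v}(t)\in k(u)[t]$ of degree $dq$ such that all of these polynomials are pairwise distinct; then $F=\prod_{d,v}p_{d,v}$ satisfies $k(u)[t]/(F)\cong R$, and $\alpha$ is the reduction map. The polynomials are distinct if and only if for each fixed $d$ we can choose $|Y'_u(d)|$ different monic irreducible polynomials of degree $dq$ over $k(u)$. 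By Lemma \ref{l:F1F2_preliminary}(3), the number of such polynomials is $\geq \mathrm{Irr}_{k(u)}(q)=(c^q-c)/q$, where $c=\sharp k(u)$, and by (4) this quantity tends to infinity as $q\to\infty$. Hence by choosing $q$ a prime with $q\neq\mathrm{char}\,k(u)$, $q>d(Y'_u)$ and $\mathrm{Irr}_{k(u)}(q)\geq\max_d|Y'_u(d)|$, all the required polynomials can be selected, producing the desired surjection.

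The only real obstacle is the existence-of-polynomials step in item (3); everything else is a formal unwinding of Lemma \ref{l:F1F2_preliminary} together with the product decomposition of an \'etale algebra. The counting is clean because, once $q$ is prime and larger than $d(Y'_u)$, every $dq$ in the range $1\leq d\leq d(Y'_u)$ is coprime to $q$, so Lemma \ref{l:F1F2_preliminary}(2)--(3) applies uniformly; the asymptotics in Lemma \ref{l:F1F2_preliminary}(4) then guarantee enough irreducibles.
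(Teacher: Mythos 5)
Your proof is correct and follows essentially the same route as the paper: items (1) and (2) are formal consequences of Lemma \ref{l:F1F2_preliminary}(2) together with the product decomposition of the \'etale algebra, and item (3) is obtained by counting monic irreducible polynomials of degree $dq$ (equivalently, degree-$dq$ points of $\mathbb A^1_{k(u)}$) via Lemma \ref{l:F1F2_preliminary}(3)--(4) and applying the Chinese Remainder Theorem. Your write-up is merely a little more explicit than the paper's about why the chosen irreducible polynomials are pairwise distinct across different values of $d$.
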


\begin{proof}
The assertions (1) and (2) follows from the lemma \ref{l:F1F2_preliminary}(2).

We now prove the third assertion. By statements (3) and (4) of Lemma \ref{l:F1F2_preliminary}
there is a prime $q\gg 0$ such that for any $d=1,...,d(Y'_u)$ one has
$Irr_{k(u)}(dq)\geq \sharp(Y'_u(d))$.
In this case for any $d=1,2,...,d(Y'_u)$ there exists
a surjective $k(u)$-algebra homomorphism
$\prod_{x\in \mathbb A^1_{k(u)}(dq)} k(u)(dq)\to \prod_{v\in Y'_u(d)}k(u)(dq)$.
Thus for this specific choice of prime $q$ there exists a surjective $k(u)$-algebra homomorphism
$k(u)[t]\to k(u)[Y'_u]\otimes_{k(u)} k(u)(q)$.
The third assertion of the lemma is proved.
\end{proof}

\begin{lem}
\label{F1F2_copy_very_final}
Let $U$ be as in Proposition \ref{SchemeY222}.
Let $u \in \bu_{fin}$ be a closed point and let $k(u)$ be its residue field.
Let $Y^{\prime}_u \to u$ be a finite \'{e}tale morphism such that
$Y^{\prime}_u$
contains a $k(u)$-rational point. Then for any
two different primes $q_1,q_2\gg 0$
the finite field extensions
$k(u)(q_1)$ and $k(u)(q_2)$ of the finite field $k(u)$ satisfy the following conditions \\
(i) for $r=1,2$ one has $q_r > d(Y'_u)$;\\
(ii) for any $r=1,2$ and any point $v\in Y'_u$ the $k(u)$-algebra $k(v) \otimes_{k(u)} k(u)(q_r)$ is a field;\\
(iii) both primes $q_1,q_2$ are strictly greater than
$\text{max}\{[k(u)(z): k(u)]| z\in Z_u \}$;\\
(iv) there is a closed embedding of $u$-schemes
$$Y_u=[(Y^{\prime}_u\otimes_{k(u)} k(u)(q_1) \coprod (Y^{\prime}_u\otimes_{k(u)} k(u)(q_2)] \xrightarrow{j_u} \mathbb A^1_u;$$\\
(v)
one has $j_u(Y_u) \cap Z_u = \emptyset$;\\
(vi) one has $1=\text{g.c.d.}_{v\in Y_u}[k(v):k(u)]$.
\end{lem}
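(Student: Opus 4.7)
The plan is to choose two distinct primes $q_1 \neq q_2$ both sufficiently large, and then use the surjectivity statement of Lemma \ref{l:F1F2_surjectivity}(3) together with a counting argument to produce the desired closed embeddings avoiding $Z_u$. Specifically, I first require $q_1, q_2$ to be primes strictly greater than $d(Y'_u)$, strictly greater than $\max\{[k(u)(z):k(u)]\mid z\in Z_u\}$, coprime to $\mathrm{char}\,k(u)$, and also large enough so that the conclusions of Lemma \ref{l:F1F2_preliminary}(3),(4) and Lemma \ref{l:F1F2_surjectivity} apply. Conditions (i) and (iii) then hold by construction. For (ii), note that for any $v\in Y'_u$ the degree $[k(v):k(u)]\leq d(Y'_u)<q_r$ is coprime to the prime $q_r$, so by Lemma \ref{l:F1F2_preliminary}(2) the tensor product $k(v)\otimes_{k(u)}k(u)(q_r)$ is the field $k(v)(q_r)$.

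For (iv) and (v), I would apply Lemma \ref{l:F1F2_surjectivity}(3) to get a surjective $k(u)$-algebra homomorphism
\[
\alpha_r: k(u)[t]\twoheadrightarrow k(u)[Y'_u]\otimes_{k(u)}k(u)(q_r)=\prod_{d=1}^{d(Y'_u)}\prod_{v\in Y'_u(d)}k(u)(dq_r),
\]
which by construction corresponds to choosing, for each factor $k(u)(dq_r)$, a closed point of $\mathbb A^1_{k(u)}$ of degree $dq_r$. The freedom in the choice of $\alpha_r$ is enormous: by Lemma \ref{l:F1F2_preliminary}(3),(4), for every $d\leq d(Y'_u)$ the set of degree $dq_r$ points in $\mathbb A^1_{k(u)}$ has cardinality $\gg \sharp Y'_u(d)+\sharp Z_u$ when $q_r$ is taken large enough. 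Hence one can pick the distinct degree-$dq_r$ points for $\alpha_1$ so as to avoid $Z_u$ (automatic, since their residue degrees $dq_1$ strictly exceed all residue degrees appearing in $Z_u$), and then pick those for $\alpha_2$ to avoid both $Z_u$ and the finite set of points already used by $\alpha_1$. This produces the desired closed embedding $j_u$ with $j_u(Y_u)\cap Z_u=\emptyset$.

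Finally, for (vi): the assumption that $Y'_u$ contains a $k(u)$-rational point $v_0$ means the factor $k(v_0)(q_r)=k(u)(q_r)$ appears as a connected component of $Y'_u\otimes_{k(u)}k(u)(q_r)$, i.e.\ $Y_u$ contains a closed point of residue degree exactly $q_r$ over $k(u)$, for both $r=1,2$. Since $q_1\neq q_2$ are distinct primes, $\gcd(q_1,q_2)=1$, so the gcd of residue degrees over all points of $Y_u$ is $1$.

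The only delicate step is the counting in the second paragraph: one must verify that the number of available degree-$dq_r$ points on $\mathbb A^1_{k(u)}$ grows fast enough (once $q_r$ is large) to accommodate all the factors indexed by $\bigsqcup_d Y'_u(d)$ while dodging the finite sets $Z_u$ and $\alpha_1(\mathrm{factors})$; but this is exactly the content of Lemma \ref{l:F1F2_preliminary}(3),(4), so no additional difficulty arises.
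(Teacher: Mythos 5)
Your proof is correct and follows essentially the same route as the paper: fix two large distinct primes, use Lemma \ref{l:F1F2_preliminary}(2) for (ii), Lemma \ref{l:F1F2_surjectivity}(3) for the embeddings, the degree bound $dq_r\ge q_r>[k(u)(z):k(u)]$ for (v), and the rational point plus $\gcd(q_1,q_2)=1$ for (vi). The only difference is cosmetic: where you arrange the two embeddings to dodge each other by a counting argument, the paper observes that disjointness is automatic since $d_1q_1=d_2q_2$ is impossible for $d_1,d_2\le d(Y'_u)<q_1,q_2$, so no extra choice is needed.
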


\begin{proof}
Clearly the conditions (i) and (iii) holds for all big enough primes $q_1,q_2$.
Applying now the lemma \ref{l:F1F2_surjectivity}(2) to the \'{e}tale $k(u)$-scheme $Y'_u$
we see that
the condition (ii)
holds also for all big enough primes $q_1,q_2$.
Take now any two different primes $q_1,q_2$
satisfying the conditions (i) to (iii) and such that $q_1,q_2\gg 0$.
We claim that for those two primes the conditions
(iv) to (vi) are satisfied.

Check the condition (iv).
Lemma \ref{l:F1F2_surjectivity}(3)
shows that there are surjections of the $k(u)$-algebras
$\alpha_r: k(u)[t] \to k(u)[Y'_u]\otimes_{k(u)} k(u)(q_r)$ for $r=1,2$.
The surjections $\alpha_r$'s induce two closed embeddings
$j_{u,r}: Y^{\prime}_u\otimes_{k(u)} k(u)(q_r) \hookrightarrow \mathbb A^1_{k(u)}$
of $k(u)$-schemes.
By the lemma \ref{l:F1F2_surjectivity}(1)
for any point $w\in Y^{\prime}_u\otimes_{k(u)} k(u)(q_r)$
one has
$deg_{k(u)}(w)=d\cdot q_r$
for an appropriate $d=1,2,...,d(Y'_u)$.
Since the primes $q_1$, $q_2$ do not coincide and
$q_r > d(Y'_u)$ for $r=1,2$, hence for any
$d_1,d_2 \in \{1,2,...,d(Y'_u)\}$
one has
$d_1q_1\neq d_2q_2$.
Thus the intersection of
$j_{u,1}(Y^{\prime}_u\otimes_{k(u)} k(u)(q_1))$ and $j_{u,2}(Y^{\prime}_u\otimes_{k(u)} k(u)(q_2))$ is empty.
Put
$$Y_u=Y^{\prime}_u\otimes_{k(u)} k(u)(q_1) \sqcup Y^{\prime}_u\otimes_{k(u)} k(u)(q_2).$$
We have checked that the $k(u)$-morphism
$$j_{u}=j_{u,1}\sqcup j_{u,2}: Y_u \to \mathbb A^1_u$$
is a closed embedding. The condition (iv) is verified.
Since for any point $w\in Y^{\prime}_u\otimes_{k(u)} k(u)(q_r)$
one has
$deg_{k(u)}(w)=d\cdot q_r\ge q_r$,
hence the above choice of $q_1$ and $q_2$
yields
the condition (v).

Since $Y^{\prime}_u$
contains a $k(u)$-rational point,
hence
$Y_u$ has a point of the degree $q_1$ over $k(u)$ and a point of the degree $q_2$ over $k(u)$.
Thus, $1=\text{g.c.d.}_{v\in Y_u}[k(v):k(u)]$.
This proves the condition (vi).
\end{proof}

\begin{proof}[Proof of Lemma \ref{F1F2} and Proposition \ref{SchemeY222}]
Let $q_1\gg 0$ be a prime number. There is a finite \'{e}tale $\bu$-scheme $\bu^{(1)}$ of degree $q_1$ over $\bu$
such that \\
(1) for $u\in \bu_{fin}$ one has $\bu^{(1)}_u=\spec k(u)(q_1)$;\\
(2) for $u\in \bu_{inf}$ one has $\bu^{(1)}_u=\sqcup^{q_1}_{1}u$.\\
Clearly, there is a finite \'{e}tale $U$-scheme $U^{(1)}$ of the form $\spec \cO[T]/(F_1(T))$
such that the polynomial $F_1(T)$ is monic of degree $q_1$ such that \\
$(*_{(1)})$ for each point $u\in \bu$ one has $U^{(1)}_u=\bu^{(1)}_u$. \\\\
Let $q_2\gg 0$ be a prime number with $q_2\neq q_1$. There is a finite \'{e}tale $\bu$-scheme $\bu^{(2)}$ of degree $q_2$ over $\bu$
such that \\
(1) for $u\in \bu_{fin}$ one has $\bu^{(2)}_u=\spec (k(u)(q_2))$;\\
(2) for $u\in \bu_{inf}$ one has $\bu^{(2)}_u=\sqcup^{q_2}_{1}u$.\\
Clearly, there is a finite \'{e}tale $U$-scheme $U^{(2)}$ of the form $\spec \cO[T]/(F_2(T))$
such that the polynomial $F_2(T)$ is monic of degree $q_2$ such that \\
$(*_{(2)})$ for each point $u\in \bu$ one has $U^{(2)}_u=\bu^{(2)}_u$. \\\\
It remains to
check that these $U^{(1)}$ and $U^{(2)}$ subject to the conditions (i)--(viii) of Lemma \ref{F1F2}. This will
prove the Lemma.
Conditions (i),(iii) and (iv) are satisfied by the choice of $U^{(1)}$ and $U^{(2)}$.\\\\
Put $Y=Y'\times_U U^{(1)}\sqcup Y'\times_U U^{(2)}$.
Conditions (ii) and (vi) are satisfied by Lemma \ref{F1F2_copy_very_final}.
Condition (vii) is satisfied since for any $u\in \bw_{inf}$ and for any $r=1,2$
one has $Y'_u(u)\neq \emptyset$ and $\bu^{(r)}_u(u)\neq \emptyset$.  \\\\
Prove now the assertion (v).
If $u \in \bu_{fin}$, then take the closed embedding
$j_u: Y_u \hra \mathbb A^1_u$ as in Lemma \ref{F1F2_copy_very_final}.
If $u \in \bu_{inf}$, then the residue field $k(u)$ is infinite. Thus,
there are many closed embeddings $j_u: Y_u \hra \mathbb A^1_u$.
Particularly, one can take a closed embedding $j_u: Y_u \hra \mathbb A^1_u$
with $j_u(Y_u) \cap Z_u = \emptyset$. Consider the closed embedding
$$j_{\bu}=\sqcup_{u\in \bu}j_u: Y_{\bu}=\sqcup_{u\in \bu}Y_u\hra \sqcup_{u\in \bu}\mathbb A^1_u=\mathbb A^1_{\bu}.$$
Put $k(\bu):=\sqcup_{u\in \bu}k(u)$.
The pull-back map of the $k(\bu)$-algebras
$j^*_{\bu}: k(\bu)[t]\to \Gamma(Y_{\bu},\cO_{Y_{\bu}})$ is surjective.
Consider the element $a=j^*_{\bu}(t)\in \Gamma(Y_{\bu},\cO_{Y_{\bu}})$.
Let $A\in \Gamma(Y,\cO_{Y})$ be a lift of the element $a$.

Consider a unique $\cO$-algebra homomorphism
$\cO[t]\xrightarrow{j^*} \Gamma(Y,\cO_{Y})$
which takes $t$ to $A$.
Since $\Gamma(Y,\cO_{Y})$ is a finitely generated $\cO$-module
and $j^*\otimes_{\cO} k(\bu)=j^*_{\bu}$,
the Nakayama lemma shows that the map $j^*$ is surjective.
Hence the induced scheme morphism
$j: Y\to \mathbb A^1_U$
is a closed embedding.
To complete the proof of the assertion $v$ it remains to check that
$j(Y)\cap Z=\emptyset$.
Since $Y$ and $Z$ are $U$-finite,
it is sufficient to check that
$j(Y)_{\bu}\cap Z_{\bu}=\emptyset$
(here for a $U$-scheme $W$ we write $W_{\bu}$ for the $\bu$-scheme $W\times_U {\bu}$). \\\\
Clearly, the two closed embeddings
$Y_{\bu}\hra Y\xrightarrow{j} \mathbb A^1_U$
and
$Y_{\bu}\xrightarrow{j_{\bu}} \mathbb A^1_{\bu} \hra \mathbb A^1_U$
coincide.
Thus, $j(Y)_{\bu}=j_{\bu}(Y_{\bu})$.
One has $j_{\bu}(Y_{\bu})\cap Z_{\bu}=\emptyset$ by the very choice of
$Y_{\bu}$ and $j_{\bu}$. We checked the equality
$j(Y)\cap Z=\emptyset$.
The assertion (v) is proved.

The assertion (viii) is true since we already verified the ones (vi) and (vii).
Lemma \ref{F1F2} is proved. Thus, Proposition \ref{SchemeY222} is proved too.
\end{proof}

\end{document}